\setlist[itemize]{leftmargin=20pt}
\newcommand{\R}{\ensuremath{\mathbf{R}}}
\newcommand{\mc}{\mathcal}
\DeclarePairedDelimiter\abs{\lvert}{\rvert}
\DeclarePairedDelimiter\cbrace\{\}
\DeclarePairedDelimiter\ha()
\DeclarePairedDelimiter{\ip}\langle\rangle
\DeclarePairedDelimiter{\nrm}\lVert\rVert
\newcommand{\hab}[1]{\bigl(#1\bigr)}
\newcommand{\bracb}[1]{\bigl[#1\bigr]}
\newcommand{\has}[1]{\Bigl(#1\Bigr)}
\newcommand{\bracs}[1]{\Bigl[#1\Bigr]}
\DeclareMathOperator{\loc}{loc}
\DeclareMathOperator{\supp}{supp}
\DeclareMathOperator{\ind}{\mathbf{1}}
\DeclareMathOperator*{\esssup}{ess\,sup}
\DeclareMathOperator*{\essinf}{ess\,inf}
\DeclareMathOperator{\BMO}{BMO}
\DeclareMathOperator{\VMO}{VMO}
\DeclareMathOperator{\CMO}{CMO}
\newcommand{\dd}{\hspace{2pt}\mathrm{d}}
\def\avint_#1{\mathchoice{\mathop{\kern 0.2em\vrule width 0.6em height 0.69678ex depth -0.58065ex \kern -0.8em \intop}\nolimits_{\kern -0.4em#1}}{\mathop{\kern 0.1em\vrule width 0.5em height 0.69678ex depth -0.60387ex \kern -0.6em \intop}\nolimits_{#1}} {\mathop{\kern 0.1em\vrule width 0.5em height 0.69678ex depth -0.60387ex \kern -0.6em \intop}\nolimits_{#1}} {\mathop{\kern 0.1em\vrule width 0.5em height 0.69678ex depth -0.60387ex \kern -0.6em \intop}\nolimits_{#1}}}
\newtheorem{theorem}{Theorem}
\newtheorem{corollary}[theorem]{Corollary}
\newtheorem{lemma}[theorem]{Lemma}
\newtheorem{proposition}[theorem]{Proposition}
\newtheorem{TheoremLetter}{Theorem}
{}
\newtheorem{PropositionLetter}[TheoremLetter]{Proposition}
\theoremstyle{remark}
\newtheorem{remark}[theorem]{Remark}
\theoremstyle{definition}
\newtheorem{definition}[theorem]{Definition}
\numberwithin{theorem}{section}
\numberwithin{equation}{section}
\title{Extrapolation of compactness on Banach function spaces}
\author{Emiel Lorist}
\address[E. Lorist]{Delft Institute of Applied Mathematics \\ Delft University of Technology \\ P.O. Box 5031\\ 2600 GA Delft \\The Netherlands}
\email{e.lorist@tudelft.nl}
\author{Zoe Nieraeth}
\address[Z. Nieraeth]{BCAM\textendash  Basque Center for Applied Mathematics, Bilbao, Spain}
\email{zoe.nieraeth@gmail.com}
\thanks{Z. N. is supported by the grant Juan de la Cierva formación 2021 FJC2021-046837-I, the Basque Government through the BERC 2022-2025 program, by the Spanish State Research Agency project PID2020-113156GB-I00/AEI/10.13039/501100011033 and through BCAM Severo Ochoa excellence accreditation SEV-2023-2026.}
\begin{document}

\begin{abstract}
We prove an extrapolation of compactness theorem for operators on Banach function spaces satisfying certain convexity and concavity conditions. In particular, we show that the boundedness of an operator $T$ in the weighted Lebesgue scale and the compactness of $T$ in the unweighted Lebesgue scale yields compactness of $T$ on a very general class of Banach function spaces. 
As our main new tool, we prove various characterizations of the boundedness of the Hardy-Littlewood maximal operator on such spaces and their associate spaces, using a novel sparse self-improvement technique. We apply our main results to prove compactness of the commutators of singular integral operators and pointwise multiplication by functions of vanishing mean oscillation on, for example, weighted variable Lebesgue spaces.
\end{abstract}

\keywords{Banach function space, compact operator, extrapolation,  Muckenhoupt weight}

\subjclass[2020]{Primary: 46E30; Secondary: 46B50, 42B25}


\maketitle

\section{Introduction}
The classical Rubio de Francia extrapolation theorem \cite{Ru82,GR85} is one of the most powerful tools in the theory of weighted norm inequalities. In its simplest form, it states that if an operator $T$ is bounded on the weighted Lebesgue space $L^p_w(\R^d)$ \textit{some} $p \in (1,\infty)$ and all weights $w \in A_p$, then $T$ is automatically bounded on $L^p_w(\R^d)$ for \textit{all} $p \in (1,\infty)$ and $w \in A_p$. Here we call a positive function $w$ a weight, we let $L^p_w(\R^d)$ denote the space of all functions $f$ such that $fw \in L^p(\R^d)$, and write $w \in A_p$ if
$$
[w]_p:= \sup_Q \,\has{\frac{1}{\abs{Q}}\int_Q {w}^p}^{\frac1p} \has{\frac{1}{\abs{Q}}\int_Q {w}^{-p'}}^{\frac1{p'}} <\infty,
$$
where the supremum is taken over all cubes $Q\subseteq \R^d$. 

In recent years, Rubio de Francia's extrapolation theorem has been extended to compact operators. Again in its simplest form,  Hyt\"onen and Lappas \cite{HL23} showed that if 
\begin{itemize}
\item $T$ is \textit{bounded}  on $L^p_w(\R^d)$ for some $p \in (1,\infty)$ and \textit{all} weights $w \in A_p$;
\item $T$ is \textit{compact} on $L^p_w(\R^d)$ for some $p \in (1,\infty)$ and \textit{some} weight $w \in A_p$;
\end{itemize}
then $T$ is compact on $L^p_w(\R^d)$ for all $p \in (1,\infty)$ and $w \in A_p$. Note that in typical applications, one checks the compactness assumption  for $p=2$ and $w=1$, i.e. on the Hilbert space $L^2(\R^d)$.

\medskip

In order to fix ideas, let us briefly sketch the proof of Hyt\"onen and Lappas \cite{HL23} in the simplest case stated above.
In essence, the argument has three main ingredients:
\begin{enumerate}[(1)]
\item\label{it:ingredient1}  The  Rubio de Francia extrapolation theorem;
\item\label{it:ingredient2} Interpolation of compactness: For Banach function spaces $X_0$ and $X_1$ and an operator $T$ which  is compact on $X_0$ and bounded on $X_1$, $T$ is compact on the  product space
$X_0^{1-\theta}\cdot X_1^\theta$ for $\theta \in (0,1)$;
\item\label{it:ingredient3} The self-improvement property of Muckenhoupt classes: For $w \in A_p$ there is an $1<r<p$ such that $w^r \in A_{p/r}$.
\end{enumerate}
Using these ingredients, one starts by observing that $T$ is bounded on $L^p_w(\R^d)$ for all $p \in (1,\infty)$ and $w \in A_p$ by \ref{it:ingredient1} and hence, by \ref{it:ingredient2}, the compactness of the operator $T$ on $L^{p}_{w}(\R^d)$ can be used to deduce the compactness of $T$  on $$L^2(\R^d) = L^{p}_{w}(\R^d)^{\frac12}\cdot L^{p'}_{w^{-1}}(\R^d)^{\frac12}.$$
Next, fix  $p\in(1,\infty)$ and $w\in A_p$. Using \ref{it:ingredient3}  on both $w\in A_p$ and $w^{-1}\in A_{p'}$, one finds  $1<r<p<s<\infty$ such that $w^r\in A_{\frac{p}{r}}$ and $w^{-s'}\in A_{\frac{p'}{s'}}$ and
$1-\frac{1}{r}=\frac{1}{s}.$
 One readily checks that this is equivalent to the condition $w_{r,s}\in A_{p_{r,s}}$, where
\[
\frac{1}{p_{r,s}}:=\frac{\frac{1}{p}-\frac{1}{s}}{\frac{1}{r}-\frac{1}{s}},\qquad \text{and} \qquad w_{r,s}:=w^{\frac{1}{\frac{1}{r}-\frac{1}{s}}}.
\]
Note that the affine transformation that maps $\frac{1}{p}\to\frac{1}{p_{r,s}}$ is the one that maps the interval $(\frac{1}{s},\frac{1}{r})$ to the interval $(0,1)$ through a translation by $-\frac{1}{s}$ and a scaling by a factor of $\frac{1}{r}-\frac{1}{s}$. Similarly, the space $X_{r,s}:=L^{p_{r,s}}_{w_{r,s}}(\R^d)$ can be scaled and translated back to the space $X:=L^p_w(\R^d)$ through the factorization
\begin{equation}\label{eq:introfactorization}
X=(X_{r,s})^{\frac{1}{r}-\frac{1}{s}}\cdot L^s(\R^d)= (X_{r,s})^{1-\theta} \cdot L^2(\R^d)^\theta, \qquad \theta = \frac2s.
\end{equation}
Therefore, since $T$ is bounded on $X_{r,s}$ and compact on $L^2(\R^d)$, this means it is also compact on $L^p_w(\R^d)$ by \ref{it:ingredient2}, proving the result.

\medskip

The extrapolation theorem of Rubio de Francia has been generalized to a general class of Banach function spaces. This was first done for rearrangement invariant spaces by Curbera, Garc\'ia-Cuerva, Martell, and P\'erez in \cite{CGMP06}. In the recent work of Cao, M\'arin and Martell \cite{CMM22} weighted Banach function spaces under weighted analogues of the conditions of \cite{BS88} were considered. In \cite{Ni23} by the second author this was generalized to the class of saturated spaces, which is also the class of Banach function spaces we consider in this work, see Section~\ref{sec:BFS}. We refer the reader to \cite[Secion~4.7]{Ni23} for a direct comparison of the assumptions with those of \cite{CMM22}.  In particular, it was shown in \cite{Ni23} that if $T$ is bounded on $L^p_w(\R^d)$ for some $p \in (1,\infty)$ and all weights $w \in A_p$, then $T$ is bounded on any Banach function space $X$ for which
\[
M:X\to X,\qquad M:X'\to X',
\]
where $M$ denotes the Hardy--Littlewood maximal operator and $X'$ the associate space of $X$. This, of course, includes the weighted Lebesgue spaces $X=L^p_w(\R^d)$ for $w \in A_p$, as it is well-known that $M$ is bounded on $X$  and $X' = L^{p'}_{w^{-1}}(\R^d)$.

Thus, we observe that \ref{it:ingredient1}  is available in the setting of Banach function spaces. Moreover, \ref{it:ingredient2} is already phrased in this setting. Therefore to extend the extrapolation of compactness theorem of Hyt\"onen and Lappas \cite{HL23} to the setting of Banach function spaces, one only needs to find a suitable replacement for \ref{it:ingredient3}, which will be the main contribution of this paper.

In the above proof sketch, \ref{it:ingredient3} was used  to deduce the factorization in \eqref{eq:introfactorization}. Thus, we are looking for a self-improvement property of the form: If $X$ is a Banach function space such that
\[
M:X\to X,\qquad M:X'\to X',
\]
then there are $1<r<s<\infty$ with $1-\frac{1}{r}=\frac{1}{s}$ such that
\begin{equation}\label{eq:selfimpintro}
    M:X_{r,s}\to X_{r,s},\quad M:(X_{r,s})'\to (X_{r,s})'
\end{equation}
for some suitable space $X_{r,s}$ satisfying \eqref{eq:introfactorization}.
In \cite{Ni23} it was shown that a space $X_{r,s}$ such that \eqref{eq:introfactorization} holds exists if and only if 
$X$ is $r$-convex and $s$-concave, i.e., 
\begin{align*}
    \big\|(|f|^r+|g|^r)^{\frac{1}{r}}\big\|_X&\leq\big(\|f\|_X^r+\|g\|_X^r\big)^{\frac{1}{r}},&& f,g\in X\\
\big(\|f\|_X^s+\|g\|_X^s\big)^{\frac{1}{s}}&\leq\big\|(|f|^s+|g|^s)^{\frac{1}{s}}\big\|_X,&& f,g\in X.
\end{align*}
We note that in this case $X_{r,s}$ is given by the formula
\begin{equation*}
X_{r,s}:= \bracs{\bracb{(X^r)'}^{(\frac{s}{r})'}}'.
\end{equation*}
Combined with the boundedness of $M$ on $X_{r,s}$ for $r$  small enough and $s$ large enough, which we will discuss below, we have sketched the proof of our first main result.

\begin{TheoremLetter}\label{thm:A}
Let 
\[
T:\bigcup_{\substack{p\in(1,\infty)\\ w\in A_p}}L^p_w(\R^d)\to L^0(\R^d)
\]
be a linear operator such that
\begin{itemize}
\item $T$ is {bounded}  on $L^p_w(\R^d)$ for some $p \in (1,\infty)$ and {all} weights $w \in A_p$;
\item $T$ is {compact} on $L^p_w(\R^d)$ for some $p \in (1,\infty)$ and {some} weight $w \in A_p$.
\end{itemize}
Let $X$ be a Banach function space over $\R^d$ such that
\[
M:X\to X,\quad M:X'\to X',
\]
and assume $X$ is  $r$-convex and $s$-concave for some $1<r<s<\infty$.
Then $T:X\to X$ is compact.
\end{TheoremLetter}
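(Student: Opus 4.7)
The plan is to follow the three-ingredient strategy of Hyt\"onen--Lappas laid out in the introduction, replacing the classical $A_p$ self-improvement \ref{it:ingredient3} by an analogous self-improvement property \eqref{eq:selfimpintro} for the boundedness of the Hardy--Littlewood maximal operator on $X$ and $X'$. This self-improvement is the main new technical input of the paper; the remainder of the argument is then assembled from ingredients already available in the Banach function space setting.

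First I would upgrade the weighted compactness hypothesis to compactness of $T$ on the unweighted Hilbert space $L^2(\R^d)$. By Rubio de Francia extrapolation \ref{it:ingredient1}, the boundedness assumption promotes to boundedness of $T$ on $L^q_v(\R^d)$ for every $q \in (1,\infty)$ and $v \in A_q$. Given compactness on some $L^{p_0}_{w_0}(\R^d)$, interpolation of compactness \ref{it:ingredient2} applied to the Calder\'on decomposition
\[
L^2(\R^d) = L^{p_0}_{w_0}(\R^d)^{\frac12}\cdot L^{p_0'}_{w_0^{-1}}(\R^d)^{\frac12}
\]
then yields compactness of $T$ on $L^2(\R^d)$; this is precisely the Hyt\"onen--Lappas argument.

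The second step is to locate the correct convexity/concavity exponents for $X$. Since $X$ is $r$-convex and $s$-concave and $M$ is bounded on both $X$ and $X'$, I would invoke the self-improvement property \eqref{eq:selfimpintro} to produce a (possibly smaller) $r$ and (possibly larger) $s$ with $1-\tfrac{1}{r}=\tfrac{1}{s}$ such that $M$ is bounded on $X_{r,s}$ and on $(X_{r,s})'$. The Banach function space version of Rubio de Francia extrapolation from \cite{Ni22} then transfers the weighted $L^p_w$-boundedness of $T$ into boundedness of $T$ on $X_{r,s}$.

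Finally, the $r$-convexity and $s$-concavity of $X$ together with the relation $1-\tfrac{1}{r}=\tfrac{1}{s}$ yield the factorization \eqref{eq:introfactorization} with $\theta = 2/s$, i.e. $X = (X_{r,s})^{1-\theta}\cdot L^2(\R^d)^{\theta}$. A second application of interpolation of compactness \ref{it:ingredient2}, this time to the boundedness of $T$ on $X_{r,s}$ and the compactness of $T$ on $L^2(\R^d)$, then gives compactness of $T$ on $X$. The main obstacle---and the reason the self-improvement \eqref{eq:selfimpintro} occupies most of the paper---is that, unlike for the $A_p$ classes, there is no pointwise characterization of ``$M$ bounded on $X$''; the proof of \eqref{eq:selfimpintro} must instead proceed via the novel sparse self-improvement technique advertised in the abstract.
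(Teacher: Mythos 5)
Your proposal is correct and follows essentially the same route as the paper's proof of Theorem~\ref{thm:main}: first upgrade the hypothesis to compactness of $T$ on $L^2(\R^d)$ via the Calder\'on product and Proposition~\ref{prop:compactinterpolation}; then apply Corollary~\ref{cor:si} (the sparse self-improvement) to choose $1<r<s<\infty$ with $1/r'=1/s$ so that $M$ is bounded on $X_{r,s}$ and $(X_{r,s})'$; extrapolate in Banach function spaces to get boundedness of $T$ on $X_{r,s}$; and finally interpolate against $L^2(\R^d)$ using the factorization $X=(X_{r,s})^{1-\theta}\cdot L^2(\R^d)^\theta$ from Proposition~\ref{prop:fact}. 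The only point worth flagging is notational: you should distinguish the $r^\ast,s^\ast$ for which $X$ is convex/concave from the smaller $r$ and larger $s$ produced by the self-improvement, since the constraint $1/r'=1/s$ (equivalently $p=2$) is achieved by shrinking $r$ and enlarging $s$ simultaneously, which is possible because Corollary~\ref{cor:si} yields a full range of admissible $(r,s)$.
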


We prove Theorem~\ref{thm:A} as Theorem~\ref{thm:main} below. We note that the $r$-convexity and $s$-concavity conditions in the case of the Lebesgue space $X=L^p_w(\R^d)$ are satisfied with $r=s=p$. Theorem~\ref{thm:A} is also applicable to, for example,  weighted variable Lebesgue spaces $X=L_w^{p(\cdot)}(\R^d)$. Here the function $p \colon \R^d \to (0,\infty)$ has to satisfy
\[
1<\essinf p \leq \esssup p <\infty
\]
so that $X$ is $r$-convex and $s$-concave with $r=\essinf p$, $s=\esssup p$, and the weight and the exponent have to satisfy some additional condition ensuring the boundedness of $M$ on $X$ (see, e.g., \cite{DHHR11, Le23} for the unweighted setting and \cite{CFN12} for the weighted setting).

As we shall see in Section \ref{sec:applications}, Theorem~\ref{thm:A} can  be used to deduce the compactness of commutators of singular integral operators and multiplication by functions with vanishing mean oscillation. We refer the reader to \cite[Section 5-8, 10]{HL23} for further examples of operators to which Theorem~\ref{thm:A} is applicable.

\begin{remark}
It is clear that the proof strategy of Theorem~\ref{thm:A} cannot work without the convexity and concavity assumptions, since, as mentioned before, the existence of the factorization \eqref{eq:introfactorization} implies that $X$ is $r$-concave and $s$-concave. This means that Theorem~\ref{thm:A} is, in particular,  not applicable to Morrey spaces, as Morrey spaces are not $s$-concave for any $s<\infty$. In \cite{La22}, Lappas proved that extrapolation of compactness in the Morrey scale is possible if one replaces the compactness assumption on $L^p_w(\R^n)$ by a compactness assumption on a Morrey space. Since there are factorization results in the spirit of \eqref{eq:introfactorization}, but with $L^s(\R^d)$ replaced by this Morrey space, this allows one to follow the same lines of reasoning as before. For a general, non-convex or non-concave Banach function space $X$, it is not clear what a suitable replacement for $L^s(\R^d)$ would be.     
\end{remark}

\medskip

Let us return to the analogue of the self-improvement property of the Muckenhoupt classes needed to prove Theorem~\ref{thm:A}, which was stated in \eqref{eq:selfimpintro}. For a Banach function space $X$, it was shown by Lerner and Ombrosi  \cite{LO10} that if $M:X\to X$, then there is an $r>1$ such that also $M:X^r\to X^r$ and hence, if
\[
M:X\to X,\quad M:X'\to X'.
\]
as in Theorem~\ref{thm:A}, 
we can  find $1<r<s<\infty$ so that
\begin{equation}\label{eq:introxmbound}
    M:X^r\to X^r,\qquad M:(X')^{s'}\to (X')^{s'}.
\end{equation}
Unfortunately, it is not clear if this implies the bounds
\begin{equation}\label{eq:introxrsmbound}
M:X_{r,s}\to X_{r,s},\qquad M:(X_{r,s})'\to (X_{r,s})'.
\end{equation}
In fact, it was shown in \cite[Theorem 2.36]{Ni23} that \eqref{eq:introxrsmbound} implies \eqref{eq:introxmbound} and the converse is an open problem, see \cite[Conjecture~2.39]{Ni23}.

Instead of using the self-improvement result of \cite{LO10} to the spaces $X$ and $X'$ separately, we will prove a simultaneous self-improvement result to show that if $M$ is bounded on $X$ and $X'$, and the space $X$ is $r^*$-concave and $s^*$-concave for some $1<r^*\leq s^*<\infty$, then \eqref{eq:introxrsmbound} holds for all $1<r\leq r^*$ small enough and $s^* \leq s< \infty$ large enough. This is a direct consequence of our second result. 
\begin{PropositionLetter}\label{thm:B}
Let $r^*\in(1,\infty)$ and let $X$ be an $r^*$-convex Banach function space over $\R^d$. Then the following are equivalent:
\begin{enumerate}[(i)]
\item\label{it:introselfimp1} We have $M:X\to X$ and $M:X'\to X'$;
\item There is an $r_0\in(1,r^*]$ so that for all $r\in(1,r_0]$ we have 
\[
M:X^{{r}}\to X^{{r}},\qquad M:(X^{{r}})'\to (X^{{r}})';
\]
\item\label{it:introselfimp3} There is an $r\in(1,r^*]$ so that $M:(X^{r})'\to (X^{r})'$.
\end{enumerate}
\end{PropositionLetter}
Proposition~\ref{thm:B} is proved as Theorem~\ref{thm:mainsi} below. It relies on a sparse characterization of the boundedness of $M$ on $X$ and $X'$, followed by a sparse self-improvement result based on a novel use of the classical reverse H\"older inequality of Muckenhoupt weights. Applying Theorem \ref{thm:B} first to $X$ and then to the resulting space $(X^r)'$ yields \eqref{eq:introxrsmbound} for some $1<r<s<\infty$, see Corollary \ref{cor:si}.
We note that Proposition~\ref{thm:B} is also of independent interest, as various works use \ref{it:introselfimp3} as an assumption, often not realizing that it is equivalent to \ref{it:introselfimp1}.

\medskip

Rubio de Francia's extrapolation theorem for $L^p_w(\R^d)$ has been generalized to the off-diagonal setting by Harboure, Mac\'ias and Segovia \cite{HMS88} and to the limited range setting by Auscher and Martell \cite{AM07}. Both settings were extended to general (quasi)-Banach function spaces $X$ in \cite{Ni23}. Using \cite[Theorem A]{Ni23}, we also obtain a limited range, off-diagonal extrapolation of compactness theorem for Banach function spaces, which is our final main result.  We refer the reader to Section \ref{sec:hard} for the definition of $A_{\vec{p},(\vec{r},\vec{s})}$.
\begin{TheoremLetter}\label{thm:C}
Let $\alpha\in\R$ and let $r_1,r_2\in[1,\infty)$, $s_1,s_2\in(1,\infty]$ satisfy $r_j<s_j$ for $j\in\{1,2\}$ and
\[
\tfrac{1}{r_1}-\tfrac{1}{r_2}=\tfrac{1}{s_1}-\tfrac{1}{s_2}=\alpha.
\]
Define
\[
\mathcal{P}:=\big\{(p_1,p_2)\in(0,\infty]^2:\tfrac{1}{p_j}\in\big[\tfrac{1}{s_j},\tfrac{1}{r_j}\big],\,j\in\{1,2\},\,\tfrac{1}{p_1}-\tfrac{1}{p_2}=\alpha\big\}
\]
and let
\[
T:\bigcup_{\substack{(p_1,p_2)\in\mathcal{P}\\ w\in A_{\vec{p},(\vec{r},\vec{s})}}} L^{p_1}_w(\R^d)\to L^0(\R^d)
\]
be a linear operator such that
\begin{itemize}
\item $T$ is {bounded}  from $L^{p_1}_w(\R^d)$ to $L^{p_2}_w(\R^d)$ for some $(p_1,p_2)\in\mathcal{P}$ and all $w\in A_{\vec{p},(\vec{r},\vec{s})}$;
\item $T$ is {compact} from $L^{p_1}_w(\R^d)$ to $L^{p_2}_w(\R^d)$ for some $(p_1,p_2)\in\mathcal{P}$ and some $w\in A_{\vec{p},(\vec{r},\vec{s})}$.
\end{itemize}
Let $r_j<r_j^\ast<s_j^\ast<s_j$ and let $X_j$ be an $r_j^\ast$-convex and $s_j^\ast$-concave Banach function space for $j\in\{1,2\}$ satisfying
\[
(X_1)_{r_1,s_1}=(X_2)_{r_2,s_2}
\]
and
\[
M:(X_1)_{r_1,s_1}\to (X_1)_{r_1,s_1},\quad M:((X_1)_{r_1,s_1})'\to ((X_1)_{r_1,s_1})'.
\]
Then $T:X_1\to X_2$ is compact.
\end{TheoremLetter}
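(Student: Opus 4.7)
The proof adapts the three-ingredient strategy behind Theorem~\ref{thm:A} to the off-diagonal, limited-range setting. The ingredients are: the off-diagonal, limited-range extrapolation theorem for Banach function spaces from \cite[Theorem~A]{Ni22}; Calderón--Lozanovskii interpolation of compactness for pairs of Banach function spaces; and our self-improvement result, Theorem~\ref{thm:B} and its consequence Corollary~\ref{cor:si}.

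The first step is a self-improvement of the range. Applying Corollary~\ref{cor:si} to the common space $Y := (X_1)_{r_1,s_1} = (X_2)_{r_2,s_2}$, whose maximal operator is bounded on $Y$ and on $Y'$ by hypothesis, and using the room provided by $r_j < r_j^*$ and $s_j^* < s_j$, I would extract parameters $\tilde r_j \in (r_j, r_j^*]$ and $\tilde s_j \in [s_j^*, s_j)$ satisfying the same off-diagonal relation
\[
\tfrac{1}{\tilde r_1} - \tfrac{1}{\tilde r_2} = \tfrac{1}{\tilde s_1} - \tfrac{1}{\tilde s_2} = \alpha,
\]
such that the refined common intermediate space $\tilde Y := (X_1)_{\tilde r_1, \tilde s_1} = (X_2)_{\tilde r_2, \tilde s_2}$ still has maximal operator bounded on $\tilde Y$ and on $\tilde Y'$.

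Next, I would set up the factorization. Using the Calderón--Lozanovskii product together with the algebraic calculus of the $(\cdot)_{r,s}$ construction, the goal is to produce an identity
\[
X_j = Z_j^{\,1-\theta} \cdot L^{q_j}_v(\R^d)^\theta,\qquad j=1,2,
\]
with a common $\theta \in (0,1)$, a pair $(q_1, q_2) \in \mathcal{P}$, a weight $v \in A_{\vec{q}, (\vec{r}, \vec{s})}$, and an intermediate BFS pair $(Z_1, Z_2)$ satisfying $(Z_1)_{r_1,s_1} = (Z_2)_{r_2,s_2}$ with the maximal operator bounded on this common space and its associate. The refinement from the previous step supplies enough slack to construct such $(Z_1, Z_2)$. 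A preliminary interpolation within the Lebesgue scale, combining the hypothesized boundedness of $T$ with the hypothesized compactness at $(L^{p_1^{(0)}}_{w^{(0)}}, L^{p_2^{(0)}}_{w^{(0)}})$ (in the spirit of the Hytönen--Lappas argument of \cite{HL23}), delivers the compactness of $T : L^{q_1}_v \to L^{q_2}_v$, while \cite[Theorem~A]{Ni22} applied to $(Z_1, Z_2)$ delivers the boundedness $T : Z_1 \to Z_2$.

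Interpolation of compactness for pairs of Banach function spaces, applied to the factorization above, then yields the desired conclusion that $T : X_1 \to X_2$ is compact. The main technical obstacle lies in the construction performed in the second step: one must choose $\theta$, $(q_1, q_2)$, $v$, and the pair $(Z_1, Z_2)$ compatibly so that both the hypothesis of the off-diagonal extrapolation and the matching with the hypothesized Lebesgue compactness are satisfied simultaneously. The off-diagonal constraint $\alpha \neq 0$ couples the factorizations of $X_1$ and $X_2$ non-trivially, and it is precisely the self-improvement of the first step that provides the parameter room needed to carry out this construction and to ensure that the resulting intermediate pair $(Z_1,Z_2)$ falls within the scope of \cite[Theorem~A]{Ni22}.
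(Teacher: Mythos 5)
Your proposal is correct and follows essentially the same route as the paper's proof of Theorem~\ref{thm:mainlrod}: interpolate the Lebesgue-scale compactness down to a convenient unweighted midpoint $1/p_j=\tfrac12(1/r_j+1/s_j)$, self-improve the maximal operator bound in the limited range, factorize each $X_j$ as a Calder\'on--Lozanovskii product with an $L^{p_j}(\R^d)$ factor, extrapolate boundedness via \cite[Theorem~A]{Ni22}, and conclude by compact interpolation (Proposition~\ref{prop:compactinterpolation}). The algebraic bookkeeping you correctly flag as the main obstacle is exactly what the paper supplies through Lemmas~\ref{lem:lrsi}, \ref{lem:lrrescale}, and \ref{lem:lrinterpolation}.
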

We prove Theorem~\ref{thm:C} as Theorem~\ref{thm:mainlrod} below. Note that Theorem~\ref{thm:C}  recovers \cite[Theorem 2.3 and 2.4]{HL23} for $X_1$ and $X_2$ weighted Lebesgue spaces in a unified result.

\begin{remark}
For weighted Lebesgue spaces, the result in \cite{HL23} was further generalized to multilinear operators by Cao, Olivo and Yabuta \cite{COY22} (see also \cite{HL22} by Hyt\"onen and Lappas). Currently, multilinear extrapolation in quasi-Banach function spaces has not yet been proven in its full expected generality (see \cite[Section 6]{Ni23}). Moreover, the compactness of multilinear operators on product spaces seems to be only available for bilinear operators (see \cite[Section 3]{CFM20}) and, although appearing naturally in the multilinear setting, quasi-Banach function spaces are not allowed in this result. Because of these limitations, we will not develop multilinear versions of our results in the current paper. We do point out that a bilinear compact extrapolation for products of weights classes extending \cite[Theorem~1.2]{COY22} can easily be obtained using \cite[Theorem~B]{Ni23}, but since the proof is just an application of the linear case presented in this paper on each component, we do not consider this an important contribution to the literature, and therefore omit it. 
\end{remark}

Finally, we would like to note that for weighted Lebesgue spaces, an extrapolation of compactness result has also been obtained in the two-weight setting by Liu, Wu and Yang \cite{LWY23}.

\medskip

The plan for this paper is as follows. We start in Section \ref{sec:BFS} by defining Banach function spaces and all their relevant properties. Afterwards, in Section \ref{sec:selfimp}, we prove the self-improvement property of the maximal operator on Banach function spaces stated in Theorem \ref{thm:B}. Sections \ref{sec:simple} and \ref{sec:hard} are devoted to proving the extrapolation of compactness results in the full range case (Theorem \ref{thm:A}) and limited range, off-diagonal case (Theorem \ref{thm:C}) respectively. Finally, in Section \ref{sec:applications} we apply Theorem \ref{thm:A} to deduce the compactness of commutators of both Calder\'on--Zygmund and rough homogeneous singular integral operators with pointwise multiplication by a function with vanishing mean oscillation.

\section{Banach function spaces}\label{sec:BFS}
Let $(\Omega,\mu)$ be a $\sigma$-finite  measure space. Let $L^0(\Omega)$ denote the space of measurable functions on $(\Omega,\mu)$. A vector space $X \subseteq L^0(\Omega)$ equipped with a norm $\nrm{\,\cdot\,}_X$ is called a \emph{Banach function space over $\Omega$} if it satisfies the following properties:
\begin{itemize}
  \item \textit{Ideal property:} If $f\in X$ and $g\in L^0(\Omega)$ with $|g|\leq|f|$, then $g\in X$ with $\nrm{g}_X\leq \nrm{f}_X$.
  \item \textit{Fatou property:} If $0\leq f_n \uparrow f$ for $(f_n)_{n\geq 1}$ in $X$ and $\sup_{n\geq 1}\nrm{f_n}_X<\infty$, then $f \in X$ and $\nrm{f}_X=\sup_{n\geq 1}\nrm{f_n}_X$.
  \item \textit{Saturation property:} For every measurable $E\subseteq\Omega$ of positive measure, there exists a measurable $F\subseteq E$ of positive measure with $\ind_F\in X$.
\end{itemize}
We note that the saturation property is equivalent to the assumption there is an $f \in X$ such that $f>0$ a.e. Moreover, the Fatou property ensures that $X$ is complete.

We define the associate space $X'$ as the space of all $g \in L^0(\R^n)$ such that
\begin{equation*}
\nrm{g}_{X'}:= \sup_{\nrm{f}_X \leq 1} \int_{\R^n} \abs{fg}<\infty,
\end{equation*}
which is again a Banach function space.
By the Lorentz--Luxembourg theorem we have $X''=X$ with equal norms.

For proofs and a further elaboration on these properties we refer the reader to the book of Zaanen \cite{Za67} and the recent survey \cite{LN23b}.

\begin{remark}
    Throughout the literature, following the book by Bennet and Sharpley \cite{BS88}, in the definition of a Banach function space $X$ it is often in addition assumed that for all measurable $E\subseteq \Omega$ with $\mu(E)<\infty$ one has \begin{equation}\label{illegal}
\ind_E\in X \quad \text{and} \quad \ind_E \in X'.
\end{equation} Note that this implies the saturation property. However, \eqref{illegal} is too restrictive to study weighted norm inequalities in harmonic analysis. Indeed, there are examples of weighted Lebesgue spaces $L^p_w(\R^d)$ for $p \in (1,\infty)$ and $w \in A_p$ that do not satisfy \eqref{illegal}, see \cite[Section 7.1]{SHYY17}.
\end{remark}

\subsection{Convexity properties}
Let $X$ be a Banach function space over $\Omega$ and $1 \leq p \leq q \leq \infty$. We call $X$ \emph{$p$-convex} if \begin{align*}
    \big\|(|f|^p+|g|^p)^{\frac{1}{p}}\big\|_X&\leq\big(\|f\|_X^p+\|g\|_X^p\big)^{\frac{1}{p}},&& f,g\in X,\intertext{and we call $X$ \emph{$q$-concave} if}
\big(\|f\|_X^q+\|g\|_X^q\big)^{\frac{1}{q}}&\leq\big\|(|f|^q+|g|^q)^{\frac{1}{q}}\big\|_X,&& f,g\in X.
\end{align*}
Note that any Banach function space is $1$-convex by the triangle inequality and $\infty$-concave by the ideal property. One often defines $p$-convexity and $q$-concavity using finite sums of elements from $X$ and a  constant in the defining inequalities, but, by \cite[Theorem 1.d.8]{LT79}, one can always renorm $X$ such that these constants are equal to one, yielding our definition.

We note that if $X$ is $p$-convex and $q$-concave, then $X$ is also $p_0$ convex and $q_0$-concave for all $p_0 \in [1,p]$ and $q_0 \in [q,\infty]$ and $X'$ is $q'$-convex and $p'$-concave (see, e.g., \cite[Section 1.d]{LT79}).

For $p\in (0,\infty)$ we define the \emph{$p$-concavification of $X$} by
$$
X^p:= \cbrace{f\in L^0(\Omega):\abs{f}^{\frac{1}{p}}\in X},
$$
i.e. for a positive $f \in L^0(\Omega)$ we have $f \in X$ if and only if ${f}^p \in X^p$. We equip $X^p$ with the quasi-norm
$$
\nrm{f}_{X^p}:= \nrm{\abs{f}^{\frac1p}}_X^p,\qquad f \in X^p.
$$
Note that $X$ is a Banach function space if and only if $X$ is $p\vee 1$-convex.

\begin{definition}
Let $1\leq r<s\leq \infty$ and let $X$ be an $r$-convex and $s$-concave Banach function space. We define the \emph{$(r,s)$-rescaled Banach function space of $X$} by
\begin{equation*}
X_{r,s}:= \bracs{\bracb{(X^r)'}^{(\frac{s}{r})'}}',
\end{equation*}
which is again a Banach function space.
\end{definition}

\subsection{Calder\'on--Lozanovskii products}
Let $X_0$ and $X_1$ be Banach function spaces over $\Omega$ and $\theta \in (0,1)$. We define the \emph{Calder\'on--Lozanovskii product} (see \cite{Ca64,Lo69})  $X_\theta:= X_0^{1-\theta}\cdot X_1^{\theta}$ as the space of those $h\in L^0(\Omega)$ for which there exist $0\leq f\in X_0$, $0\leq g\in X_1$ such that $|h|\leq f^{1-\theta}g^\theta$. We equip this space with the norm
\[
\|h\|_{X_\theta}:=\inf\|f\|_{X_0}^{1-\theta}\|g\|_{X_1}^\theta,
\]
where the infimum is taken over all $0\leq f\in X_0$, $0\leq g\in X_1$ for which  $|h|\leq f^{1-\theta}g^\theta$. We note that $X_\theta$ is a Banach function space, see e.g. \cite[Appendix~7]{CNS03}.

The following proposition will play a key role in the proof of Theorem \ref{thm:A} and Theorem \ref{thm:C}. Note that one can interpret the appearing products as Calder\'on--Lozanovskii products since $$L^s(\Omega) = L^{1+\frac{s}{r'}}(\Omega)^{1-(\frac1r-\frac1s)}.$$
\begin{proposition}  \label{prop:fact}Let $1\leq r<s\leq\infty$ and let $X$  Banach function space over $\Omega$.
\begin{enumerate}[(i)]
\item\label{it:lozfact1} If $X$ is $r$-convex and $s$-concave, then 
$$
X = (X_{r,s})^{\frac1r-\frac1s} \cdot L^s(\Omega).
$$
\item\label{it:lozfact2}  If there is a Banach function space $Y$ such that
$$
X = Y^{\frac1r-\frac1s} \cdot L^s(\Omega),
$$
then $X$ is $r$-convex and $s$-concave and $Y=X_{r,s}$.
\end{enumerate}
\end{proposition}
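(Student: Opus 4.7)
The plan is to prove both parts by systematic use of Lozanovskii's factorization theorem $L^1 = Z \cdot Z'$ (valid for any saturated Banach function space $Z$), together with Lozanovskii's duality formula $(Z_0^{1-\theta}Z_1^\theta)' = (Z_0')^{1-\theta}(Z_1')^\theta$ and the identity $(L^p)^\theta = L^{p/\theta}$. Setting $\alpha := \tfrac{1}{r} - \tfrac{1}{s}$, the key numerical facts are $\alpha \cdot (s/r)' = \tfrac{1}{r}$ and $\alpha + \tfrac{1}{s} = \tfrac{1}{r}$, so the exponents will slot together precisely.

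For part (i), first observe that $X_{r,s}$ is a genuine Banach function space: since $X$ is $s$-concave, $X^r$ is $(s/r)$-concave, hence $(X^r)'$ is $(s/r)'$-convex, so $((X^r)')^{(s/r)'}$ is $1$-convex (a BFS) and $X_{r,s}$ is its associate. Apply Lozanovskii to $X^r$ and take $\tfrac{1}{r}$-powers to obtain
\[
L^r \;=\; X \cdot \bigl((X^r)'\bigr)^{1/r}.
\]
Apply Lozanovskii to $X_{r,s}$ and, using $(X_{r,s})' = ((X^r)')^{(s/r)'}$ by definition, raise to the power $\alpha$:
\[
L^{1/\alpha} \;=\; (X_{r,s})^{\alpha} \cdot \bigl((X^r)'\bigr)^{\alpha(s/r)'} \;=\; (X_{r,s})^{\alpha} \cdot \bigl((X^r)'\bigr)^{1/r}.
\]
Multiplying the second identity by $L^s$ and invoking $L^r = L^{1/\alpha} \cdot L^s$ (H\"older, from $\tfrac{1}{r} = \alpha + \tfrac{1}{s}$) gives
\[
X \cdot \bigl((X^r)'\bigr)^{1/r} \;=\; (X_{r,s})^{\alpha} \cdot L^s \cdot \bigl((X^r)'\bigr)^{1/r},
\]
and cancelling the common factor $((X^r)')^{1/r}$ produces the claimed factorization.

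For part (ii), given $X = Y^\alpha \cdot L^s$, rewrite it as a genuine Calder\'on--Lozanovskii product $X = Y^\alpha \cdot (L^{1+s/r'})^{1-\alpha}$ via the identity $L^s = (L^{1+s/r'})^{1-\alpha}$ highlighted just before the proposition. The $r$-convexity and $s$-concavity of $X$ then follow from the standard interpolation of convexity and concavity indices through Calder\'on--Lozanovskii products: substituting the indices of $L^{1+s/r'}$ and $Y$ into $1/p_\theta = (1-\theta)/p_0 + \theta/p_1$ with $\theta = \alpha$ gives $p_\theta = r$, and likewise $q_\theta = s$ for concavity. To identify $Y$ with $X_{r,s}$, take the $r$-concavification: $X^r = Y^{r\alpha} \cdot L^{s/r} = Y^{r\alpha} \cdot (L^1)^{r/s}$, a genuine CL product since $r\alpha + r/s = 1$. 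Dualise via Lozanovskii to obtain $(X^r)' = (Y')^{r\alpha} \cdot (L^\infty)^{r/s} = (Y')^{r\alpha}\cdot L^\infty$, then raise to the power $(s/r)'$: using $r\alpha\cdot(s/r)' = 1$ collapses the $Y'$-factor to the first power and leaves an absorbing $L^\infty$-factor, so $((X^r)')^{(s/r)'} = Y'$, whence $X_{r,s} = Y'' = Y$ by the Lorentz--Luxembourg theorem.

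The main technical obstacle is the rigorous justification of the ``cancellation'' of $((X^r)')^{1/r}$ in part (i) and of the $L^\infty$-factors in part (ii): cancelling a common factor in an identity of (generalised) Calder\'on--Lozanovskii products requires that factor to be nondegenerate, which is typically handled by taking associate spaces and invoking Lozanovskii duality to reduce to an equality of BFS norms. Careful bookkeeping is needed to distinguish which products are pointwise versus Calder\'on--Lozanovskii, and which spaces are Banach versus quasi-Banach function spaces at intermediate steps; the numerics of the exponents themselves reduce cleanly to the two identities highlighted in the first paragraph.
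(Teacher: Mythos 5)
Your overall strategy for both parts---Lozanovskii factorization $L^1 = Z\cdot Z'$, Lozanovskii duality for Calder\'on--Lozanovskii products, and careful exponent bookkeeping---is in the right spirit, and part (ii) is essentially complete: raising $X=Y^{\alpha}\cdot L^s$ to the $r$-th power, dualizing to get $(X^r)'=(Y')^{r\alpha}\cdot L^\infty=(Y')^{r\alpha}$, raising to the $(s/r)'$-th power and using $r\alpha(s/r)'=1$ correctly identifies $((X^r)')^{(s/r)'}=Y'$, hence $X_{r,s}=Y''=Y$; the convexity/concavity of $X$ via interpolation of convexity indices is likewise standard. Part (i), however, has a genuine gap. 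After establishing $X\cdot\bigl((X^r)'\bigr)^{1/r}=L^r=(X_{r,s})^{\alpha}\cdot\bigl((X^r)'\bigr)^{1/r}\cdot L^s$ you ``cancel the common factor $((X^r)')^{1/r}$''. This is not a step you can take for free: cancellation of a common factor in an equality of pointwise products of Banach function spaces is equivalent, after raising to the $r$-th power, to the uniqueness of the complementary Lozanovskii factor (if $L^1=A\cdot B$ then $B=A'$), which is a nontrivial theorem requiring its own proof. You flag this as ``the main technical obstacle'' and gesture at associate spaces, but you never carry it out, so (i) as written is not a proof.

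The cleaner route---and it is the same duality mechanism you already use in (ii)---avoids cancellation entirely. Write $(X_{r,s})^{\alpha}\cdot L^s=(X_{r,s})^{\alpha}\cdot\bigl(L^{1+s/r'}\bigr)^{1-\alpha}$ as a genuine Calder\'on--Lozanovskii product, then apply Lozanovskii's duality formula together with $(X_{r,s})'=\bigl[(X^r)'\bigr]^{(s/r)'}$ and $\alpha(s/r)'=1/r$ to compute
\[
\bigl[(X_{r,s})^{\alpha}\cdot L^s\bigr]'=\bigl[(X^r)'\bigr]^{1/r}\cdot L^{r'}=\bigl[(X^r)'\bigr]^{1/r}\cdot\bigl(L^1\bigr)^{1/r'}.
\]
One more application of Lozanovskii duality, using $(X^r)''=X^r$, gives that the associate of the right-hand side is $(X^r)^{1/r}\cdot\bigl(L^\infty\bigr)^{1/r'}=X$. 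Since every space in sight is a Banach function space with the Fatou property, taking the double associate yields $(X_{r,s})^{\alpha}\cdot L^s=X''=X$ directly. I recommend redoing (i) along these lines rather than trying to justify the cancellation lemma.
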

\begin{proof}
   For \ref{it:lozfact1} we refer the reader to \cite[Corollary 2.12]{Ni23}, and \ref{it:lozfact2} is proven analogously to \cite[Theorem 2.13]{Ni23}, substituting $X_{r,s}$ by $Y$.
\end{proof}

\section{Sparse self-improvement for the maximal operator}\label{sec:selfimp}
As we will need Proposition~\ref{thm:B} in the proof of Theorem~\ref{thm:A}, we start in this section by proving 
Proposition~\ref{thm:B} and its consequences. First we introduce some notation. For $r \in (0,\infty)$, a cube $Q \subseteq \R^d$ and function $f \in L^0(\R^d)$ we define the $r$-average of $f$ by
$
  \ip{f}_{r,Q}:= \ha{\frac{1}{\abs{Q}} \int_Q \abs{f}^r}^{1/r},
$
and we define the Hardy--Littlewood maximal operator by
\begin{align*}
  Mf&:= \sup_{Q} \,\ip{f}_{1,Q} \ind_Q,
\end{align*}
where the supremum is taken over all cubes $Q$ in $\R^d$.

\begin{theorem}\label{thm:mainsi}
Let $r^*\in(1,\infty)$ and let $X$ be an $r^*$-convex Banach function space over $\R^d$. Then the following are equivalent:
\begin{enumerate}[(i)]
\item\label{it:Ainftyconvexbfs1} We have $M:X\to X$ and $M:X'\to X'$;
\item\label{it:Ainftyconvexbfs2} There is an $r_0\in(1,r^*]$ so that for all ${r}\in(1,r_0]$ we have 
\[
M:X^{{r}}\to X^{{r}},\qquad M:(X^r)'\to (X^r)';
\]
\item\label{it:Ainftyconvexbfs3} There is an $r\in(1,r^*]$ so that $M:(X^{r})'\to (X^{r})'$.
\end{enumerate}
\end{theorem}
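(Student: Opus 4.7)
I establish the three-way equivalence cyclically: (i) $\Rightarrow$ (ii) $\Rightarrow$ (iii) $\Rightarrow$ (i). The implication (ii) $\Rightarrow$ (iii) is immediate by taking any $r \in (1, r_0]$. The real work lies in (i) $\Rightarrow$ (ii) and in (iii) $\Rightarrow$ (i).

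The implication (i) $\Rightarrow$ (ii) is the self-improvement statement and is organized in three steps. First, I establish a \emph{sparse characterization} of (i): for every sparse family $\mathcal{S}$ of cubes the sparse operator
$
A_{\mathcal{S}} f := \sum_{Q \in \mathcal{S}} \langle f \rangle_{1,Q} \ind_Q
$
is bounded on $X$ with a norm independent of $\mathcal{S}$. By the symmetry $\int (A_{\mathcal{S}}f)g = \sum_Q |Q|^{-1}\int_Q f \int_Q g = \int f(A_{\mathcal{S}}g)$, the operators $A_{\mathcal{S}}$ are self-adjoint, so this is equivalent to the analogous uniform bound on $X'$. The ``$\Leftarrow$'' direction comes from the pointwise sparse domination of $M$; the ``$\Rightarrow$'' direction exploits both bounds in (i), the second providing the dual control needed to close the estimate. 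Second, I prove a \emph{reverse H\"older inequality} in this abstract context: the sparse bound forces the cube indicators $\ind_Q$ to behave like Muckenhoupt $A_\infty$ weights against the $X$- and $X'$-norms, and then the classical Muckenhoupt reverse H\"older inequality yields a uniform gain $\epsilon > 0$ such that the standard averages $\langle f \rangle_{1,Q}$ may be upgraded to $\langle f \rangle_{1+\epsilon,Q}$ in the sparse operator while preserving its boundedness on $X$. Third, I \emph{bootstrap} this gain to a uniform sparse bound on $X^r$ (and on $(X^r)'$) for every $r \in (1, r_0]$ with $r_0 := \min(r^*,\, 1 + \epsilon_0)$; the $r^*$-convexity of $X$ is used here exactly to ensure that $X^r$ remains a Banach function space for $r \leq r^*$. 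Applying the sparse characterization in reverse on $X^r$ then delivers~(ii).

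For (iii) $\Rightarrow$ (i): the sparse characterization of Step~1, now applied to the Banach function space $X^r$, converts $M : (X^r)' \to (X^r)'$ into a uniform sparse bound on $(X^r)'$, and by self-adjointness of $A_{\mathcal{S}}$ also into a uniform sparse bound on $(X^r)'' = X^r$; hence $M : X^r \to X^r$. The pointwise inequality $Mf \leq (M(|f|^r))^{1/r}$ then yields $M : X \to X$. The dual bound $M : X' \to X'$ is obtained by the parallel argument on the associate side, combined with the Calder\'on--Lozanovskii identification of $X'$ with a rescaled version of $(X^r)'$ provided by the $r^*$-convexity of $X$.

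The main obstacle I expect is the reverse H\"older step: in a general Banach function space there is no underlying Muckenhoupt weight, so the reverse H\"older inequality must be obtained purely from the sparse boundedness hypothesis, by extracting an exponential integrability estimate for the cube-indicators $\ind_Q$ and then applying the scalar Muckenhoupt reverse H\"older inequality to the resulting quantities. This is the ``novel use of the classical reverse H\"older inequality'' highlighted by the authors, and it is the one point in the argument that does not reduce to standard sparse or duality manipulations.
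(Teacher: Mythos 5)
Your overall architecture for (i)$\Rightarrow$(ii)$\Rightarrow$(iii) matches the paper's, and the sparse characterization you put in Step~1 is indeed the paper's Lemma~\ref{lem:sparsembounds}. However, two of your steps contain genuine gaps.

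\emph{Step~2 of (i)$\Rightarrow$(ii).} You attribute the reverse H\"older gain to the cube indicators $\ind_Q$ ``behaving like $A_\infty$ weights,'' but there is no such statement available and it is not the mechanism used. The object to which the reverse H\"older inequality is actually applied is a Rubio de Francia majorant of the data: given $f\in X^r$, one sets
\[
w:=\sum_{k=0}^\infty\frac{M^k(|f|^{1/r})}{2^k\|M\|_{X\to X}^k}.
\]
This $w$ satisfies $|f|^{1/r}\leq w$, $\|w\|_X\leq 2\|f\|_{X^r}^{1/r}$, and (crucially) $[w]_{A_\infty}\lesssim \|M\|_{X\to X}$ by the $A_\infty$-characterization via $M$. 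The sharp reverse H\"older inequality for $A_\infty$ then gives $\langle w\rangle_{r,Q}\leq 2\langle w\rangle_{1,Q}$ for every cube once $r'\gtrsim\|M\|_{X\to X}$, and since $\langle f\rangle_{1,Q}=\langle|f|^{1/r}\rangle^r_{r,Q}\leq\langle w\rangle^r_{r,Q}$ this lets one dominate $T_{\mathcal S}f$ by $(2\,T_{\mathcal S}w)^r$ and conclude $\|T_{\mathcal S}f\|_{X^r}\leq 4^r\|T_{\mathcal S}\|^r_{X\to X}\|f\|_{X^r}$. Without this weight construction it is unclear how to extract any reverse H\"older gain from the sparse boundedness alone.

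\emph{The implication (iii)$\Rightarrow$(i).} This step, as you present it, does not close. The sparse characterization reads: for a Banach function space $Y$, the \emph{conjunction} of $M:Y\to Y$ and $M:Y'\to Y'$ is equivalent to the uniform sparse bound on $Y$. From the single hypothesis $M:(X^r)'\to(X^r)'$ you cannot deduce the uniform sparse bound on $(X^r)'$ --- you would also need $M:(X^r)''=X^r\to X^r$, which is part of what you are trying to prove. The self-adjointness of $A_{\mathcal S}$ does not help here, since it only transfers a sparse bound between $Y$ and $Y'$, not upgrades a one-sided $M$-bound to a sparse bound. In the paper this implication is instead handled by a quantitative extrapolation argument (\cite[Theorem~4.16 and Remark~4.17]{Ni22}), derived from Buckley's sharp weighted bound $\|M\|_{L^p_w\to L^p_w}\lesssim_d p'[w]_p^{p'}$, which yields
\[
\|M\|_{X\to X}\lesssim_d r'2^{\frac{1}{r-1}}\|M\|_{(X^{r})'\to (X^{r})'}^{\frac{1}{r-1}},\qquad
\|M\|_{X'\to X'}\lesssim_d r\|M\|_{(X^{r})'\to (X^{r})'}.
\]
This is the missing ingredient; some such quantitative input is needed precisely because (iii) is a priori weaker than the sparse bound.
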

Before we turn to the proof, we provide two useful corollaries. First of all, we obtain the following equivalent formulations of the bounds
\[
M:X\to X,\qquad M:X'\to X',
\]
assumed in Theorem~\ref{thm:A}.
\begin{corollary}\label{cor:mboundequivalences}
Let $X$ be a Banach function space for which there exist $1<r^*<s^*<\infty$ such that $X$ is $r^*$-convex and $s^*$-concave. Then the following are equivalent:
\begin{enumerate}[(i)]
\item\label{it:convexbfs1} We have $M:X\to X$ and $M:X'\to X'$;
\item\label{it:convexbfs2} There is an $r\in(1,r^*]$ so that $M:(X^{r})'\to (X^{r})'$;
\item\label{it:convexbfs3} There is an $s\in[s^*,\infty)$ so that $M:X_{1,s}\to X_{1,s}$.
\end{enumerate}
\end{corollary}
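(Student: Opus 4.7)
The plan is to deduce both equivalences directly from Theorem~\ref{thm:mainsi}, applied once to $X$ itself and once to the associate space $X'$. The equivalence \ref{it:convexbfs1} $\Leftrightarrow$ \ref{it:convexbfs2} is nothing but the equivalence of parts (i) and (iii) of Theorem~\ref{thm:mainsi} applied to $X$, which is $r^*$-convex by hypothesis, so no further argument is needed.

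For \ref{it:convexbfs1} $\Leftrightarrow$ \ref{it:convexbfs3} I would apply Theorem~\ref{thm:mainsi} to $X'$ in place of $X$. To do so I first need to verify the convexity hypothesis: since $X$ is $s^*$-concave, the standard duality between convexity and concavity recorded in Section~\ref{sec:BFS} yields that $X'$ is $(s^*)'$-convex, and $(s^*)'>1$ because $s^*<\infty$. Moreover, by the Lorentz--Luxembourg theorem $X''=X$, so the joint boundedness $M:X\to X$ and $M:X'\to X'$ is symmetric in $X$ and $X'$; hence condition (i) of Theorem~\ref{thm:mainsi} applied to $X'$ is literally condition \ref{it:convexbfs1}. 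The theorem then identifies \ref{it:convexbfs1} with the existence of some $t\in(1,(s^*)']$ such that $M:((X')^t)'\to((X')^t)'$.

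It remains to rewrite this dualised self-improvement in the form of \ref{it:convexbfs3}. Unwinding the definition of the rescaled space at $r=1$ gives
\[
X_{1,s}=\bracs{\bracb{(X^1)'}^{(s/1)'}}'=\bracs{(X')^{s'}}',
\]
so the substitution $t=s'$ identifies the range $t\in(1,(s^*)']$ with $s\in[s^*,\infty)$ and turns $M:((X')^t)'\to((X')^t)'$ into the bound $M:X_{1,s}\to X_{1,s}$ of \ref{it:convexbfs3}. I do not anticipate any genuine obstacle here; the only conceptual point is to apply Theorem~\ref{thm:mainsi} to $X'$ rather than $X$, after which the argument reduces to the bookkeeping identity $((X')^{s'})'=X_{1,s}$.
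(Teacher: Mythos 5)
Your proof is correct and follows exactly the paper's own argument: apply Theorem~\ref{thm:mainsi} to $X$ for the first equivalence, and to the $(s^*)'$-convex space $X'$ for the second, using the identity $X_{1,s}=\bracs{(X')^{s'}}'$. The extra remark about $X''=X$ making the hypothesis symmetric is a helpful clarification but not a substantive difference.
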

\begin{proof}
The equivalence of \ref{it:convexbfs1} and \ref{it:convexbfs2} is contained in Theorem~\ref{thm:mainsi} whereas the equivalence of \ref{it:convexbfs1} and \ref{it:convexbfs3} follows from applying the first equivalence with $X$ replaced by $X'$, which is $(s^*)'$-convex, to find an $s'\in(1,(s^*)']$ for which $M$ is bounded on $[(X')^{s'}]'=X_{1,s}$.
\end{proof}

Our second corollary is the self-improvement property in Banach function spaces discussed in the introduction, which will replace the self-improvement property of the Muckenhoupt classes in the proof of Theorem~\ref{thm:A} and Theorem~\ref{thm:C}:
\begin{corollary}\label{cor:si}
Let $X$ be a Banach function space for which there exist $1<r^*<s^*<\infty$ such that $X$ is $r^*$-convex, $s^*$-concave, and
\[
M:X\to X,\qquad M:X'\to X'.
\]
Then there exist $r_0\in(1,r^*]$, $s_0\in[s^*,\infty)$ such that for all ${r}\in(1,r_0]$ and ${s}\in[s_0,\infty)$ we have
\[
M:X_{{r},{s}}\to X_{{r},{s}},\qquad M:(X_{{r},{s}})'\to(X_{{r},{s}})'.
\]
\end{corollary}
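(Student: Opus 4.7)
The plan is to apply Theorem~\ref{thm:mainsi} twice in sequence. First, since $X$ is $r^*$-convex and $M$ is bounded on both $X$ and $X'$, Theorem~\ref{thm:mainsi} produces $r_0\in(1,r^*]$ such that for every $r\in(1,r_0]$,
\[
M:X^r\to X^r,\qquad M:(X^r)'\to(X^r)'.
\]
Because $X^r$ inherits $s^*/r$-concavity from $X$, the associate space $(X^r)'$ is $(s^*/r)'$-convex with $(s^*/r)'>1$, and $M$ is bounded on $(X^r)'$ together with its associate $((X^r)')'=X^r$ by the display above.

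Next I apply Theorem~\ref{thm:mainsi} a second time, now to the space $(X^r)'$. This yields $t_0(r)\in(1,(s^*/r)']$ such that for every $t\in(1,t_0(r)]$,
\[
M:((X^r)')^t\to((X^r)')^t,\qquad M:(((X^r)')^t)'\to(((X^r)')^t)'.
\]
Setting $t=(s/r)'$ and using $(((X^r)')^{(s/r)'})'=X_{r,s}$ by the very definition of the rescaled space, these bounds translate into $M$-boundedness on both $X_{r,s}$ and $(X_{r,s})'$ precisely when $s\geq r\cdot t_0(r)'$. Specializing at $r=r_0$ yields the natural candidate $s_0:=r_0\cdot t_0(r_0)'\in[s^*,\infty)$, which handles the endpoint $r=r_0$.

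The hard part will be upgrading this pointwise conclusion into the uniform rectangle $(1,r_0]\times[s_0,\infty)$ required by the corollary: a single pair $(r_0,s_0)$ must work for every $r$ and $s$ in the stated ranges. To accomplish this I would also apply Theorem~\ref{thm:mainsi} directly to the $(s^*)'$-convex space $X'$, which supplies the limiting threshold associated to $r=1$. Since $(X^r)'$ deforms continuously into $X'$ as $r\to 1^+$, after possibly shrinking $r_0$ the thresholds $t_0(r)$ can be kept bounded away from $1$ on the whole interval $(1,r_0]$, so that $\sup_{r\in(1,r_0]}r\cdot t_0(r)'$ is finite; this supremum then serves as a uniform $s_0$, yielding the desired rectangular region of validity.
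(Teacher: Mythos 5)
Your first two applications of Theorem~\ref{thm:mainsi} match the paper's proof exactly: first obtain $r_0$, then, noting that $(X^{r_0})'$ is $(\frac{s^*}{r_0})'$-convex, apply the theorem a second time to produce a threshold $t_0$ and hence $s_0:=r_0t_0'$. You also correctly identify the genuine difficulty: the corollary asserts a single pair $(r_0,s_0)$ works over the whole rectangle $(1,r_0]\times[s_0,\infty)$, not just at the corner. The problem is that your resolution of this difficulty does not hold up. The assertion that ``$(X^r)'$ deforms continuously into $X'$ as $r\to 1^+$, after possibly shrinking $r_0$ the thresholds $t_0(r)$ can be kept bounded away from $1$'' is not substantiated: there is no topology on the family of associate spaces supplied anywhere in the framework, and Theorem~\ref{thm:mainsi} gives no modulus-of-continuity control of $t_0(r)$ in terms of $r$. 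Indeed, $t_0(r)$ is controlled through $\|M\|_{(X^r)'\to(X^r)'}$, and you have not shown that these operator norms stay bounded as $r\to 1^+$, which is the nontrivial content your argument would need.

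The paper avoids this issue entirely by only establishing the boundedness at the single corner point $(r_0,s_0)$ and then invoking an external monotonicity result: it records that $(X_{r_0,s_0})'=(X')_{s_0',r_0'}$ by \cite[Proposition~2.14]{Ni22}, and then applies \cite[Proposition~2.30]{Ni22} to propagate the $M$-boundedness on $X_{r_0,s_0}$ and its associate to $X_{r,s}$ and $(X_{r,s})'$ for all $r\in(1,r_0)$, $s\in(s_0,\infty)$. This monotonicity step (informally: shrinking the rescaling parameters preserves boundedness of $M$ on the rescaled space and its associate) is precisely the missing ingredient in your argument, and it replaces the unsupported continuity claim. You should cite such a rescaling-monotonicity lemma rather than attempt a uniformity argument over a family of separate applications of Theorem~\ref{thm:mainsi}.
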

\begin{proof}
By Theorem~\ref{thm:mainsi} there is a $r_0\in(1,r^*]$ for which
\[
M: X^{r_0}\to X^{r_0},\qquad M:(X^{r_0})'\to (X^{r_0})'.
\]
Note that $(X^{r_0})'$ is $(\frac{s^*}{r_0})'$-convex. By applying Theorem~\ref{thm:mainsi} to $(X^{r_0})'$, we find a $t_0\in(1,(\frac{s^*}{r_0})']$ such that $M$ is bounded on $[(X^{r_0})']^{t_0}$ and $\big([(X^{r_0})']^{t_0}\big)'$.

Now define $s_0:=r_0t_0'>s^*$. Then we have
$t_0=(\frac{s_0}{r_0})'$,
so that
\[
M:X_{r_0,s_0}\to X_{r_0,s_0},\qquad M:(X_{r_0,s_0})'\to(X_{r_0,s_0})'.
\]
Letting ${r}\in(1,r_0)$, ${s}\in(s_0,\infty)$ and noting that $(X_{r_0,s_0})'=(X')_{s_0',r_0'}$ by \cite[Proposition~2.14]{Ni23}, it follows from \cite[Proposition~2.30]{Ni23} that also
\[
M:X_{{r},{s}}\to X_{{r},{s}},\qquad M:(X_{{r},{s}})'\to(X_{{r},{s}})'.
\]
The assertion follows.
\end{proof}

We now turn to the proof of Theorem~\ref{thm:mainsi}. As a final preparation, we require a lemma on sparse operators. A collection of cubes $\mathcal{S}$ in $\R^d$ is called \emph{sparse} if for each $Q\in\mathcal{S}$ there is a measurable set $E_Q\subseteq Q$ for which $|E_Q|\geq\frac12|Q|$ and, furthermore, the collection $(E_Q)_{Q\in\mathcal{S}}$ is pairwise disjoint. For a sparse collection of cubes $\mathcal{S}$ and $f\in L^0(\R^d)$ we define
\[
T_{\mathcal{S}}f:=\sum_{Q\in\mathcal{S}}\langle f\rangle_{1,Q}\ind_Q.
\]
We have the following characterization of when $M$ is bounded on $X$ and $X'$ in terms of $T_{\mathcal{S}}$.
\begin{lemma}\label{lem:sparsembounds}
Let $X$ be a Banach function space over $\R^d$. Then
\[
M:X\to X,\qquad M:X'\to X'
\]
if and only if we have $T_{\mathcal{S}}:X\to X$ for all sparse collections of cubes $\mathcal{S}$ with
\[
\sup_{\mathcal{S}\text{ is sparse}}\|T_{\mathcal{S}}\|_{X\to X}<\infty.
\]
\end{lemma}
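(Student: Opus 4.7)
The plan is to prove the two implications separately, using duality and the classical pointwise sparse domination of the maximal operator.

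For the forward implication, assume $M$ is bounded on $X$ and $X'$. I would use the duality $\|T_{\mathcal{S}}f\|_X = \sup\bigl\{\int |T_{\mathcal{S}}f|\,g : \|g\|_{X'}\leq 1,\ g\geq 0\bigr\}$ coming from $X'' = X$. Expanding and using the sparseness condition $|Q|\leq 2|E_Q|$ together with the disjointness of $(E_Q)_{Q\in\mathcal{S}}$ gives
\[
\int T_{\mathcal{S}}f\cdot g\ =\ \sum_{Q\in\mathcal{S}}\langle f\rangle_{1,Q}\langle g\rangle_{1,Q}|Q|\ \leq\ 2\sum_{Q\in\mathcal{S}}\int_{E_Q} Mf\cdot Mg\ \leq\ 2\int Mf\cdot Mg.
\]
Applying the Hölder-type inequality $\int |hk|\leq \|h\|_X\|k\|_{X'}$ (which is immediate from the definition of $X'$) and the boundedness of $M$ on $X$ and $X'$ then yields
\[
\|T_{\mathcal{S}}f\|_X\ \leq\ 2\,\|M\|_{X\to X}\,\|M\|_{X'\to X'}\,\|f\|_X,
\]
which is uniform in $\mathcal{S}$.

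For the backward implication, I would rely on two ingredients. First, the now-classical pointwise sparse domination of Lerner: for every $f\in L^0(\R^d)$ for which $Mf$ is finite a.e., there exists a sparse collection $\mathcal{S}=\mathcal{S}(f)$ and a dimensional constant $c_d$ with $Mf\leq c_d\, T_{\mathcal{S}}f$ pointwise a.e. (This is typically formulated for $f\in L^1_{\text{loc}}$, but the saturation property of $X$ together with the assumed bound on $T_{\mathcal{S}}$ allows one to reduce to functions for which $Mf$ is finite.) This immediately gives $\|Mf\|_X\leq c_d\sup_\mathcal{S}\|T_\mathcal{S}\|_{X\to X}\,\|f\|_X$, proving $M:X\to X$. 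Second, observe that $T_{\mathcal{S}}$ is self-adjoint in the duality $\langle f,g\rangle = \int fg$, since
\[
\int T_{\mathcal{S}}f\cdot g\ =\ \sum_{Q\in\mathcal{S}}\langle f\rangle_{1,Q}\langle g\rangle_{1,Q}|Q|\ =\ \int f\cdot T_{\mathcal{S}}g.
\]
Combined with the standard duality identification $\|T_\mathcal{S}\|_{X'\to X'}=\|T_\mathcal{S}\|_{X\to X}$ that follows from $X'' = X$, the uniform bound on $X$ transfers to a uniform bound on $X'$. Applying Lerner's sparse domination once more, now to $g\in X'$, gives $M:X'\to X'$.

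The only real subtlety is the qualitative assumption needed to invoke Lerner's sparse domination (namely that $Mf$ is a.e. finite, or that $f\in L^1_{\text{loc}}$). This can be handled in a routine way: by the saturation property we can test $T_\mathcal{S}$ against compactly supported bounded $f$, for which Lerner's estimate applies directly, and then use the Fatou property of $X$ to pass to the general case by a standard truncation-and-monotone-convergence argument. I expect this density/qualitative reduction to be the only place where care is needed; the dualization and the sparse domination itself are entirely standard.
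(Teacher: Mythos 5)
Your proof is correct, but it takes a genuinely different route from the paper's. The paper works through the bi(sub)linear maximal operator $M_{1,1}(f,g) := \sup_Q\langle f\rangle_{1,Q}\langle g\rangle_{1,Q}\ind_Q$ and invokes \cite[Proposition~6.1]{Ni22}, which says that $M$ is bounded on both $X$ and $X'$ if and only if $M_{1,1}\colon X\times X'\to L^1(\R^d)$; the lemma is then reduced to showing that $M_{1,1}\colon X\times X'\to L^1(\R^d)$ is equivalent to the uniform sparse bound, via the pointwise sparse domination of $M_{1,1}$ from \cite[Proposition~3.2.10]{Ni20}. Your forward implication is essentially the same computation, with the minor simplification of bounding $\langle f\rangle_{1,Q}\langle g\rangle_{1,Q}$ directly by $Mf\cdot Mg$ on $E_Q$ and then applying H\"older and the two maximal bounds, instead of routing through $M_{1,1}$. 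Your backward implication is where the approaches genuinely diverge: rather than citing the duality result from \cite{Ni22}, you use (a) the classical pointwise sparse domination of the linear operator $M$, and (b) the elementary observation that $T_\mathcal{S}$ is positive and self-adjoint, so $\|T_\mathcal{S}\|_{X'\to X'}=\|T_\mathcal{S}\|_{X\to X}$ by $X''=X$, allowing you to run the same argument on $X'$. This is more elementary and self-contained (it avoids the black box of \cite[Proposition~6.1]{Ni22}), at the price of having to handle the qualitative reduction to $f\in L^\infty_c$ explicitly, which you correctly flag and which goes through by the ideal and Fatou properties (take $f_n=\min(|f|,n)\ind_{B_n}\uparrow|f|$ and use $Mf_n\uparrow Mf$). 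One small imprecision: the $3^d$-lattice trick gives $Mf\leq c_d\sum_{i=1}^{3^d}T_{\mathcal{S}_i}f$ with one sparse family per shifted dyadic lattice, not a single sparse family; this costs a harmless extra factor $3^d$ via the triangle inequality in $X$, but the domination by a \emph{single} sparse operator as you wrote it is not literally what the standard result gives.
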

\begin{proof}
For $f,g\in L^0(\R^d)$ we define
\[
M_{1,1}(f,g):=\sup_{Q}\,\langle f\rangle_{1,Q}\langle g\rangle_{1,Q}\ind_Q.
\]
By \cite[Proposition~6.1]{Ni23} it suffices to show that $M_{1,1}:X\times X'\to L^1(\R^d)$ if and only if for all sparse collections $\mathcal{S}$ we have $T_\mathcal{S}:X\to X$ uniformly in $\mathcal{S}$ with
\[
\sup_{\mathcal{S}\text{ is sparse}}\|T_{\mathcal{S}}\|_{X\to X}\eqsim_{d}\|M_{1,1}\|_{X\times X'\to L^1(\R^d)}.
\]

Assume first that $M_{1,1}:X\times X'\to L^1(\R^d)$. If $\mathcal{S}$ is sparse, then for $f \in X$ and $g \in X'$
\begin{align*}
\|(T_\mathcal{S} f)g\|_{L^1(\R^d)}&=\sum_{Q\in\mathcal{S}}\langle f\rangle_{1,Q}\langle g\rangle_{1,Q}|Q|\\
&\leq 2\sum_{Q\in\mathcal{S}}\int_{E_Q}\!M_{1,1}(f,g)\,\mathrm{d}x\\
&\leq 2\,\|M_{1,1}(f,g)\|_{L^1(\R^d)}
\end{align*}
Hence,
\[
\sup_{\mathcal{S}\text{ is sparse}}\|T_{\mathcal{S}}\|_{X\to X}
=\sup_{\mathcal{S}\text{ is sparse}}\sup_{\substack{\|f\|_X=1\\ \|g\|_{X'}=1}}\|(T_\mathcal{S} f)g\|_{L^1(\R^d)}\leq 2\,\|M_{1,1}\|_{X\times X'\to L^1(\R^d)}.
\]
Conversely, using \cite[Proposition~3.2.10]{Ni20}, for each $f,g\in L^0(\R^d)$, each dyadic lattice $\mathcal{D}$ in $\R^d$, and each finite collection $\mathcal{F}\subseteq\mathcal{D}$, there exists a sparse collection $\mathcal{S}\subseteq\mathcal{F}$ such that
\[
M^\mathcal{F}_{1,1}(f,g)\leq 4\sum_{Q\in\mathcal{S}}\langle f\rangle_{1,Q}\langle g\rangle_{1,Q}\ind_Q,
\]
where the superscript $\mc{F}$ indicates that the defining supremum in the definition of $M_{1,1}$ is only taken over $Q \in \mc{F}$. Hence,
\begin{align*}
\|M^\mathcal{F}_{1,1}(f,g)\|_{L^1(\R^d)}
&\leq 4\sum_{Q\in\mathcal{S}}\langle f\rangle_{1,Q}\langle g\rangle_{1,Q}|Q|=4\,\|(T_\mathcal{S} f)g\|_{L^1(\R^d)}\\
&\leq 4\,\sup_{\mathcal{S}\text{ is sparse}}\|T_{\mathcal{S}}\|_{X\to X}\|f\|_X\|g\|_{X'}.
\end{align*}
The assertion now follows from the monotone convergence theorem and the $3^d$ lattice theorem.
\end{proof}

\begin{proof}[Proof of Theorem~\ref{thm:mainsi}]
For \ref{it:Ainftyconvexbfs1}$\Rightarrow$\ref{it:Ainftyconvexbfs2}, we first note that the sharp reverse H\"older inequality \cite[Theorem 2.3]{HP13} states that there is a dimensional constants $c_d\geq 1$ so that for all weights $w$ such that
$$
[w]_{A_\infty} := \sup_Q \frac{1}{w(Q)} \int_Q M(w\ind_Q) <\infty
$$
and all ${r}\in(1,\infty)$ satisfying ${r}'\geq c_d[w]_{A_\infty}$ we have
\[
\langle w\rangle_{{r},Q}\leq 2\, \langle w\rangle_{1,Q}
\]
for all cubes $Q$ in $\R^d$.

Fix an $r_0\in(1,r^*]$ with $r_0'\geq 2c_d\|M\|_{X\to X}$, and let ${r}\in(1,r_0]$. For $f\in X^r$ we define
\[
w:=\sum_{k=0}^\infty\frac{M^k(|f|^{\frac{1}{{r}}})}{2^k\|M\|^k_{X\to X}},
\]
where $M^k$ denotes the $k$-th iterate of $M$. Then we have
\[
c_d[w]_{A_\infty}\leq 2c_d\|M\|_{X\to X}\leq {r}',
\]
and therefore $\langle w\rangle_{{r},Q}\leq 2\langle w\rangle_{1,Q}$ for all cubes $Q$. 

Let $\mathcal{S}$ be a sparse collection. Since $|f|^{\frac{1}{{r}}}\leq w$ and $\|w\|_X\leq 2\|f\|_{X^{{r}}}^{\frac{1}{ {r}}}$, we obtain
\begin{align*}
\|T_{\mathcal{S}}f\|_{X^{ {r}}}
&=\Big\|\Big(\sum_{Q\in\mathcal{S}}\langle |f|^{\frac{1}{ {r}}}\rangle_{ {r},Q}^{ {r}}\ind_Q\Big)^{\frac{1}{ {r}}}\Big\|^{ {r}}_{X}
\leq\Big\|\sum_{Q\in\mathcal{S}}\langle w\rangle_{ {r},Q}\ind_Q\Big\|^{ {r}}_{X}\\
&\leq 2^r \|T_{ {\mathcal{S}}}w\|_X^{ {r}}
\leq 2^r \|T_{\mathcal{S}}\|_{X\to X}^{ {r}}\|w\|^{ {r}}_{X}\leq 4^{ {r}}\|T_{\mathcal{S}}\|_{X\to X}^{ {r}}\|f\|_{X^{ {r}}}.
\end{align*}
Thus, we have
\[
\sup_{\mathcal{S}\text{ is sparse}}\|T_{{S}}\|_{X^{ {r}}\to X^{ {r}}}\leq 4^{r}\Big(\sup_{\mathcal{S}\text{ is sparse}}\|T_{\mathcal{S}}\|_{X\to X}\Big)^{ {r}}.
\]
The result now follows from Lemma~\ref{lem:sparsembounds}.

The implication \ref{it:Ainftyconvexbfs2}$\Rightarrow$\ref{it:Ainftyconvexbfs3} is immediate.
To prove \ref{it:Ainftyconvexbfs3}$\Rightarrow$\ref{it:Ainftyconvexbfs1}, we apply \cite[Theorem~4.16 and Remark~4.17]{Ni23} to the classical bound $$\|M\|_{L^p_w(\R^d)\to L^p_w(\R^d)}\lesssim_d p'[w]_p^{p'}$$ by Buckley \cite{Bu93} to conclude that
\[
\|M\|_{X\to X}\lesssim_d r'2^{\frac{1}{r-1}}\|M\|_{(X^{r})'\to (X^{r})'}^{\frac{1}{r-1}},\quad
\|M\|_{X'\to X'}\lesssim_d r\|M\|_{(X^{r})'\to (X^{r})'}.
\]
This proves the result.
\end{proof}

\begin{remark}
    Theorem \ref{thm:mainsi} can easily be extended to Banach function spaces $X$ over a space of homogeneous type $(S,d,\mu)$ in the sense of Coifman and Weiss \cite{CW71}. Maximal operator bounds and sparse domination estimates in this setting are available through the use of e.g. Hyt\"onen--Kairema cubes \cite{HK12} (see also Christ \cite{Ch90}) and the sharp reverse H\"older inequality needs to be replaced by the weak sharp reverse H\"older inequality by Hyt\"onen, Perez and Rela \cite{HPR12}.
\end{remark}

\section{Full range compact extrapolation}\label{sec:simple}
This section is dedicated to the proof of the full range extrapolation of compactness theorem, Theorem~\ref{thm:A}:
\begin{theorem}\label{thm:main}
Let 
\[
T:\bigcup_{\substack{p\in(1,\infty)\\ w\in A_p}}L^p_w(\R^d)\to L^0(\R^d)
\]
be a linear operator such that
\begin{itemize}
\item $T$ is {bounded}  on $L^p_w(\R^d)$ for some $p \in (1,\infty)$ and {all} weights $w \in A_p$;
\item $T$ is {compact} on $L^p_w(\R^d)$ for some $p \in (1,\infty)$ and {some} weight $w \in A_p$.
\end{itemize}
Let $X$ be a Banach function space over $\R^d$ such that
\[
M:X\to X,\quad M:X'\to X',
\]
and assume $X$ is  $r^*$-convex and $s^*$-concave for some $1<r^\ast<s^\ast<\infty$.
Then $T:X\to X$ is compact.
\end{theorem}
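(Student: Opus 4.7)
The plan is to follow the Hyt\"onen--Lappas argument sketched in the introduction, with Corollary~\ref{cor:si} playing the role of the self-improvement property of Muckenhoupt classes and the Banach function space version of Rubio de Francia extrapolation from \cite{Ni22} playing the role of its classical counterpart.

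First I would apply Corollary~\ref{cor:si} to $X$ to produce $r_0\in(1,r^\ast]$ and $s_0\in[s^\ast,\infty)$ such that $M$ is bounded on $X_{r,s}$ and on $(X_{r,s})'$ for every $r\in(1,r_0]$ and $s\in[s_0,\infty)$. To replicate the factorization \eqref{eq:introfactorization}, I would then choose $s$ so large that $s\geq s_0$, $s>2$, and $r:=s'\in(1,r_0]$; this is possible since $r_0>1$. For this choice of $r,s$ one has $1/r-1/s=1-2/s$, so Proposition~\ref{prop:fact}\ref{it:lozfact1}, together with the remark following its statement, yields the Calder\'on--Lozanovskii factorization
\[
X=(X_{r,s})^{1-\theta}\cdot L^2(\R^d)^{\theta},\qquad \theta=\tfrac{2}{s}\in(0,1).
\]

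Next I would invoke the Banach function space extrapolation theorem of \cite{Ni22} on the boundedness hypothesis for $T$ to conclude that $T$ is bounded on every Banach function space $Y$ satisfying $M:Y\to Y$ and $M:Y'\to Y'$. In particular $T$ would be bounded both on $X_{r,s}$ and on $L^{p_0'}_{w_0^{-1}}(\R^d)$ (using that $w_0\in A_{p_0}$ implies $w_0^{-1}\in A_{p_0'}$), where $(p_0,w_0)$ denotes the pair from the compactness hypothesis. I would then apply ingredient (2) from the introduction twice. A first application to $T$ compact on $L^{p_0}_{w_0}(\R^d)$ and bounded on $L^{p_0'}_{w_0^{-1}}(\R^d)$, via the standard Calder\'on--Lozanovskii identity
\[
L^2(\R^d)=L^{p_0}_{w_0}(\R^d)^{1/2}\cdot L^{p_0'}_{w_0^{-1}}(\R^d)^{1/2},
\]
would give compactness of $T$ on $L^2(\R^d)$. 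A second application to $T$ bounded on $X_{r,s}$ and compact on $L^2(\R^d)$, combined with the factorization above, would yield the desired compactness of $T$ on $X$.

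The main conceptual obstacle is obtaining the simultaneous maximal operator bounds on $X_{r,s}$ and $(X_{r,s})'$ with enough flexibility in the choice of $r,s$ to additionally impose the conjugacy constraint $s=r'$; this is precisely what Corollary~\ref{cor:si} provides. Once that is in place, the remaining argument is a clean amalgamation of Banach function space extrapolation and interpolation of compactness, with no essential complication beyond the weighted Lebesgue setting already treated in \cite{HL23}.
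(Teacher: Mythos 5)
Your proposal is correct and follows essentially the same route as the paper's proof: apply Corollary~\ref{cor:si} to self-improve the $M$-bounds to the rescaled spaces $X_{r,s}$, choose $s$ large and $r=s'$ to make the Lebesgue factor in the factorization of Proposition~\ref{prop:fact} equal to $L^2(\R^d)$, obtain compactness on $L^2(\R^d)$ by one application of the Cobos--K\"uhn--Schonbek interpolation result, and then transfer it to $X$ by a second application after extrapolating boundedness to $X_{r,s}$. The only cosmetic difference is that you also obtain the boundedness on $L^{p_0'}_{w_0^{-1}}(\R^d)$ from the Banach function space extrapolation theorem, whereas the paper just cites classical Rubio de Francia extrapolation for this step.
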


As noted in the introduction, we will need three main ingredients to prove Theorem \ref{thm:main}. We will need the Rubio de Francia extrapolation result from \cite{Ni23}, the self-improvement result from Section \ref{sec:selfimp} and a result on the compactness of operators on product spaces. The latter is a special case of a result of Cobos, K\"uhn and Schonbek \cite[Theorem~3.1]{CKS92} with the function parameter $\rho(t)=t^\theta$, which we formulate next.
\begin{proposition}[{\cite[Theorem~3.1]{CKS92}}]\label{prop:compactinterpolation}
Let $(\Omega,\mu)$ be a $\sigma$-finite measure space and let $X_0$, $X_1$, $Y_0$, $Y_1$ be Banach function spaces over $\Omega$. Let $T\colon X_0+X_1\to Y_0+Y_1$ be a linear operator such that
\begin{itemize}
    \item $T$ is bounded from $X_0$ to $Y_0$;
    \item $T$ is compact from $X_1$ to $Y_1$.
\end{itemize}
Then
\[
T:X_0^{1-\theta} \cdot X_1^\theta\to Y_0^{1-\theta} \cdot Y_1^\theta
\]
is  compact for all $\theta \in (0,1)$.
\end{proposition}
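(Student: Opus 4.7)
The plan is to execute the three-ingredient strategy outlined in the introduction, now powered by the tools developed in Sections~\ref{sec:BFS} and \ref{sec:selfimp}. I would begin by applying the self-improvement Corollary~\ref{cor:si}: since $X$ is $r^*$-convex, $s^*$-concave, and $M$ is bounded on both $X$ and $X'$, there exist $r_0\in(1,r^*]$ and $s_0\in[s^*,\infty)$ such that
\[
M:X_{r,s}\to X_{r,s},\qquad M:(X_{r,s})'\to (X_{r,s})'
\]
for every $r\in(1,r_0]$ and $s\in[s_0,\infty)$. The freedom in this choice lets me simultaneously impose the conjugacy relation $s=r'$ (equivalently $\tfrac1r+\tfrac1s=1$) together with $s>2$: first fix $s\geq\max(s_0,2)$ large enough, then set $r:=s'$, which lies in $(1,r_0]$ as soon as $s$ is taken large enough because $r\to 1$ as $s\to\infty$.

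With this $r$ and $s$ chosen, I would extract two boundedness statements for $T$. First, the Rubio de Francia extrapolation theorem for saturated Banach function spaces from \cite{Ni22} (as discussed in the introduction) applies to the pair $X_{r,s}$ and $(X_{r,s})'$, yielding $T:X_{r,s}\to X_{r,s}$ bounded. Second, by the classical Rubio de Francia extrapolation $T$ is bounded on every weighted Lebesgue space in the $A_p$ range; letting $(p_0,w_0)$ be the pair on which $T$ is assumed compact, this in particular gives boundedness of $T$ on $L^{p_0'}_{w_0^{-1}}(\R^d)$ (noting $w_0^{-1}\in A_{p_0'}$). Combining the compactness on $L^{p_0}_{w_0}(\R^d)$ with this boundedness via Proposition~\ref{prop:compactinterpolation} at $\theta=\tfrac12$ and the standard Calderón--Lozanovskii identity
\[
L^2(\R^d)=L^{p_0}_{w_0}(\R^d)^{1/2}\cdot L^{p_0'}_{w_0^{-1}}(\R^d)^{1/2}
\]
then yields compactness of $T$ on $L^2(\R^d)$.

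For the final step, Proposition~\ref{prop:fact}(i) provides the factorization
\[
X=(X_{r,s})^{\frac{1}{r}-\frac{1}{s}}\cdot L^s(\R^d).
\]
Setting $\theta:=\tfrac{2}{s}\in(0,1)$ and using $s=r'$, the arithmetic identities $1-\theta=\tfrac{2}{r}-1=\tfrac{1}{r}-\tfrac{1}{s}$ and $L^2(\R^d)^\theta=L^{2/\theta}(\R^d)=L^s(\R^d)$ (in the concavification sense of Section~\ref{sec:BFS}) rewrite this as the Calderón--Lozanovskii product
\[
X=(X_{r,s})^{1-\theta}\cdot L^2(\R^d)^\theta.
\]
A second application of Proposition~\ref{prop:compactinterpolation}, now with $X_0=Y_0=X_{r,s}$ (on which $T$ is bounded) and $X_1=Y_1=L^2(\R^d)$ (on which $T$ is compact), then delivers the desired compactness of $T$ on $X$.

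The main subtlety is pinning down the exponents so that the second factor in the factorization of $X$ is precisely a power of the Hilbert space $L^2(\R^d)$ where compactness has been produced; this is what forces the constraint $s=r'$ and is exactly why the $r^*$-convexity and $s^*$-concavity of $X$ enter as hypotheses. They make the factorization of Proposition~\ref{prop:fact} available and give Corollary~\ref{cor:si} enough flexibility to simultaneously satisfy the conjugacy relation and the self-improved maximal bounds. Once this alignment is achieved, the rest is a clean two-step interpolation of compactness.
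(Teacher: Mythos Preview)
Your proposal does not address the stated proposition at all. Proposition~\ref{prop:compactinterpolation} is the abstract interpolation-of-compactness result for Calder\'on--Lozanovskii products: given $T$ bounded $X_0\to Y_0$ and compact $X_1\to Y_1$, conclude compactness on $X_0^{1-\theta}\cdot X_1^\theta\to Y_0^{1-\theta}\cdot Y_1^\theta$. This is quoted from \cite[Theorem~3.1]{CKS92} and the paper does not supply a proof of it; it is used as a black box.

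What you have written is instead a proof of Theorem~\ref{thm:A} (equivalently Theorem~\ref{thm:main}), the full-range extrapolation-of-compactness theorem. Your argument \emph{invokes} Proposition~\ref{prop:compactinterpolation} twice rather than establishing it, so as a proof of the proposition it is circular. As a proof of Theorem~\ref{thm:main} your outline is correct and matches the paper's own argument essentially step for step: reduce the compactness hypothesis to $L^2(\R^d)$ via the factorization $L^2=L^{p_0}_{w_0}{}^{1/2}\cdot L^{p_0'}_{w_0^{-1}}{}^{1/2}$ and Proposition~\ref{prop:compactinterpolation}; use Corollary~\ref{cor:si} to get maximal bounds on $X_{r,s}$ and $(X_{r,s})'$ for suitable $r,s$; extrapolate boundedness of $T$ to $X_{r,s}$ via \cite[Theorem~A]{Ni22}; then choose $s=r'$ so that the factorization $X=(X_{r,s})^{1-\theta}\cdot L^2(\R^d)^\theta$ with $\theta=2/s$ is available, and apply Proposition~\ref{prop:compactinterpolation} once more. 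But none of this touches the content of the proposition you were asked to prove.
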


Having all main ingredients at our disposal, the proof of Theorem~\ref{thm:main} is rather short.

\begin{proof}[Proof of Theorem~\ref{thm:main}]
Note that, by the classical Rubio de Francia extrapolation theorem, $T$ is bounded on $L^p_w(\R^d)$ for all $p \in (1,\infty)$ and $w \in A_p$. Moreover, since 
$$L^2(\R^d) = L^{p}_{w}(\R^d)^{\frac12}\cdot L^{p'}_{w^{-1}}(\R^d)^{\frac12}$$
and $w \in A_p$ if and only if $w^{-1} \in A_{p'}$,
Proposition \ref{prop:compactinterpolation} implies that $T$ is compact on $L^2(\R^d)$.

By Corollary~\ref{cor:si}, there are $r_0\in(1,r^*]$ and $s_0\in[s^*,\infty)$ such that $M$ is bounded on $X_{{r},{s}}$ and $(X_{{r},{s}})'$ for all ${r}\in(1,r_0]$ and ${s}\in[s_0,\infty)$. Hence, by Rubio de Francia extrapolation in Banach function spaces as in \cite[Theorem~A]{Ni23}, $T$ is bounded on $X_{{r},{s}}$ for all ${r}\in(1,r_0]$ and ${s}\in[s_0,\infty)$.

By Proposition \ref{prop:fact}, we have
\[
X=X_{{r},{s}}^{1-\theta}\cdot L^p(\R^d)^\theta.
\]
with $\theta=1-(\frac{1}{{r}}-\frac{1}{{s}})\in(0,1)$ and $p=1+\frac{{s}}{{r}'}$. Choosing $\frac{1}{{r}'}=\frac{1}{{s}}$ small enough, we have $p=2$ and $T$ is bounded on $X_{{r},{s}}$.
Since $T$ is  compact on $L^2(\R^d)$, $T$ is compact on $X$ by Proposition~\ref{prop:compactinterpolation}. This proves the result.
\end{proof}

\section{Limited range, off-diagonal compact extrapolation}\label{sec:hard}
In this section, we prove Theorem~\ref{thm:C}. Essentially, the steps are the same as in the proof of Theorem~\ref{thm:main}, and the new difficulties lie mainly in unwinding the definitions while incorporating the additional parameters. 

For $1 \leq r <p<s\leq \infty$, we say that a weight $w$ belongs to the limited range Muckenhoupt class $A_{p,(r,s)}$ if 
$$
[w]_{p,(r,s)} := \sup_Q \,\ip{w}_{\frac{1}{\frac{1}{p}-\frac{1}{s}},Q}\ip{w^{-1}}_{\frac{1}{\frac{1}{r}-\frac{1}{p}},Q}< \infty.
$$
For $\alpha\in\R$, $r_1,r_2\in[1,\infty)$, $s_1,s_2\in(1,\infty]$ for which $\frac{1}{s_j}<\frac{1}{r_j}$ for $j\in\{1,2\}$ and
\[
\tfrac{1}{r_1}-\tfrac{1}{r_2}=\tfrac{1}{s_1}-\tfrac{1}{s_2}=\alpha,
\]
we note that we have $A_{p_1,(r_1,s_1)}=A_{p_2,(r_2,s_2)}$. For a weight $w$ in this class we will write $w\in A_{\vec{p},(\vec{r},\vec{s})}$. Recall that our limited range, off-diagonal extrapolation of compactness theorem reads as follows:
 
\begin{theorem}\label{thm:mainlrod}
Let $\alpha\in\R$ and let $r_1,r_2\in[1,\infty)$, $s_1,s_2\in(1,\infty]$ satisfy $\frac{1}{s_j}<\frac{1}{r_j}$ for $j\in\{1,2\}$ and
\[
\tfrac{1}{r_1}-\tfrac{1}{r_2}=\tfrac{1}{s_1}-\tfrac{1}{s_2}=\alpha.
\]
Define
\[
\mathcal{P}:=\big\{(p_1,p_2)\in(0,\infty]^2:\tfrac{1}{p_j}\in\big[\tfrac{1}{s_j},\tfrac{1}{r_j}\big],\,j\in\{1,2\},\,\tfrac{1}{p_1}-\tfrac{1}{p_2}=\alpha\big\}
\]
and let
\[
T:\bigcup_{\substack{(p_1,p_2)\in\mathcal{P}\\ w\in A_{\vec{p},(\vec{r},\vec{s})}}} L^{p_1}_w(\R^d)\to L^0(\R^d)
\]
be a linear operator such that
\begin{itemize}
\item $T$ is {bounded}  from $L^{p_1}_w(\R^d)$ to $L^{p_2}_w(\R^d)$ for some $(p_1,p_2)\in\mathcal{P}$ and all $w\in A_{\vec{p},(\vec{r},\vec{s})}$;
\item $T$ is {compact} from $L^{p_1}_w(\R^d)$ to $L^{p_2}_w(\R^d)$ for some $(p_1,p_2)\in\mathcal{P}$ and some $w\in A_{\vec{p},(\vec{r},\vec{s})}$.
\end{itemize}
Let $r_j<r_j^\ast<s_j^\ast<s_j$ and let $X_j$ be an $r_j^\ast$-convex and $s_j^\ast$-concave Banach function space for $j\in\{1,2\}$ satisfying
\[
(X_1)_{r_1,s_1}=(X_2)_{r_2,s_2}
\]
and
\[
M:(X_1)_{r_1,s_1}\to (X_1)_{r_1,s_1},\quad M:((X_1)_{r_1,s_1})'\to ((X_1)_{r_1,s_1})'.
\]
Then $T:X_1\to X_2$ is compact.
\end{theorem}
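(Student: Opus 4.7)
The plan is to mirror the three-step strategy used for Theorem~\ref{thm:main}, modified to handle the off-diagonal exponents and the limited range weight class. The three ingredients are: the classical limited range off-diagonal Rubio de Francia extrapolation of Auscher--Martell \cite{AM07} together with its Banach function space version \cite[Theorem~B]{Ni22}; compactness interpolation for Calder\'on--Lozanovskii products (Proposition~\ref{prop:compactinterpolation}); and the self-improvement property from Corollary~\ref{cor:si}.

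First I would apply the classical limited range off-diagonal Rubio de Francia extrapolation to promote the given boundedness hypothesis to boundedness of $T\colon L^{p_1}_w(\R^d)\to L^{p_2}_w(\R^d)$ for all $(p_1,p_2)\in\mathcal{P}$ and all $w\in A_{\vec p,(\vec r,\vec s)}$. Second, I would combine, via Proposition~\ref{prop:compactinterpolation}, the given single compactness estimate $T\colon L^{\tilde p_1}_{\tilde w}\to L^{\tilde p_2}_{\tilde w}$ with extrapolated boundedness at a carefully chosen dual pair and weight of the form $\tilde w^{-(1-\theta)/\theta}$ (which lies in the limited range Muckenhoupt class after a self-improvement adjustment), to deduce compactness of $T\colon L^{q_1}_v(\R^d)\to L^{q_2}_v(\R^d)$ at a canonical pair $(q_1,q_2)\in\mathcal{P}$ and weight $v$. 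This is the off-diagonal analogue of obtaining compactness on the unweighted $L^2$ in the proof of Theorem~\ref{thm:main} via $L^2=L^p_w^{1/2}\cdot L^{p'}_{w^{-1}}^{1/2}$; the pair $(q_1,q_2,v)$ is selected so that $(L^{q_j}_v)_{j=1,2}$ realizes the second factor in a common Calder\'on--Lozanovskii decomposition of the form $X_j=A_j^{1-\theta}\cdot L^{q_j}_v^\theta$.

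Third, setting $Y:=(X_1)_{r_1,s_1}=(X_2)_{r_2,s_2}$ and using $M\colon Y\to Y$ and $M\colon Y'\to Y'$, Corollary~\ref{cor:si} yields parameters $r^\dagger\in(1,r_1^\ast]$ and $s^\dagger\in[s_1^\ast,\infty)$ for which $M$ is bounded on the rescaled space $Y_{r^\dagger,s^\dagger}$ and on its associate. By the Banach function space extrapolation of \cite[Theorem~B]{Ni22}, this yields boundedness of $T\colon A_1\to A_2$ for a pair of rescaled Banach function spaces $A_j$ whose common rescaled space is $Y_{r^\dagger,s^\dagger}$; arranging the rescaling so that Proposition~\ref{prop:fact} delivers the factorization $X_j=A_j^{1-\theta}\cdot L^{q_j}_v^\theta$ with the same $\theta$ and the canonical pair from Step~2, a final application of Proposition~\ref{prop:compactinterpolation} produces the desired compactness of $T\colon X_1\to X_2$.

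The hard part is Step~2: the exponents $(q_1,q_2)$ and the weight $v$ coming out of Proposition~\ref{prop:compactinterpolation} must simultaneously lie in $\mathcal{P}\times A_{\vec q,(\vec r,\vec s)}$, preserve the off-diagonal identity $\frac{1}{q_1}-\frac{1}{q_2}=\alpha$, and be compatible with a Calder\'on--Lozanovskii decomposition of $X_j$ coming from Proposition~\ref{prop:fact}. This balancing act exploits the symmetry $w\in A_{p,(r,s)}\Leftrightarrow w^{-1}\in A_{p',(s',r')}$ of the limited range Muckenhoupt class together with the strict convexity/concavity gap $r_j<r_j^\ast<s_j^\ast<s_j$, which provides enough room on the $A$-side to match whatever canonical pair Step~2 produces.
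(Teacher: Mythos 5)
Your overall strategy is the right one and mirrors the paper's: extrapolate boundedness in the weighted Lebesgue scale, interpolate the single compactness estimate against a dual pair to reach a canonical anchor, then self-improve and refactorize to transfer compactness to the pair $(X_1,X_2)$. But you explicitly flag the ``balancing act'' in Step~2 and Step~3 as the hard part and then leave it unresolved, and this is precisely where the proof lives.

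Concretely, three gaps remain. First, the anchor point in Step~2 can and should be made completely explicit: taking $(q_1,q_2)\in\mathcal{P}$ defined by $\tfrac{1}{q_j}=\tfrac{1}{s_j}+\tfrac{1}{r_j}-\tfrac{1}{p_j}$, one has $w^{-1}\in A_{\vec q,(\vec r,\vec s)}$ \emph{by direct inspection of the weight constant}, with no self-improvement needed; interpolating with $\theta=\tfrac12$ then yields compactness on the unweighted spaces $L^{1/(\frac12(1/r_j+1/s_j))}(\R^d)$. Your mention of a weight $\tilde w^{-(1-\theta)/\theta}$ that needs a ``self-improvement adjustment'' to land in the weight class, and of an undetermined $(q_1,q_2,v)$, suggests you haven't pinned down this clean duality and have left the endpoint of Step~2 ambiguous, whereas Proposition~\ref{prop:fact} only factorizes against \emph{unweighted} $L^s$, so $v=1$ is forced if you want to reuse that proposition. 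Second, Corollary~\ref{cor:si} as stated applies to a space that is $r^*$-convex and $s^*$-concave and produces boundedness on $Y_{r^\dagger,s^\dagger}$, but you still need to translate this back into a statement about $(X_j)_{\widetilde r_j,\widetilde s_j}$ for parameters $\widetilde r_j,\widetilde s_j$ respecting both the off-diagonal constraint $\tfrac1{\widetilde r_1}-\tfrac1{\widetilde r_2}=\tfrac1{\widetilde s_1}-\tfrac1{\widetilde s_2}=\alpha$ and the strict interior condition $r_j<\widetilde r_j<\widetilde s_j<s_j$; this is exactly the content of Lemma~\ref{lem:lrsi}, which is a genuinely limited-range variant of the self-improvement and does not follow by merely invoking the full-range Corollary~\ref{cor:si} on $Y$. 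Third, you assert that one can ``arrange the rescaling so that Proposition~\ref{prop:fact} delivers the factorization $X_j=A_j^{1-\theta}\cdot L^{q_j}_v{}^\theta$ with the same $\theta$,'' but Proposition~\ref{prop:fact} as stated produces $X_j=((X_j)_{\widetilde r_j,\widetilde s_j})^{1/\widetilde r_j-1/\widetilde s_j}\cdot L^{\widetilde s_j}(\R^d)$, whose exponent parameter $\theta_j=1-(1/\widetilde r_j-1/\widetilde s_j)$ a priori depends on $j$ and whose second Lebesgue factor has exponent $\widetilde s_j$, not the midpoint exponent from Step~2. Matching a common $\theta$ across $j=1,2$, identifying the correct intermediate space $A_j=[(X_j)_{\widetilde r_j,t_j}]^{1/r_j}$, verifying that $(A_j)_{r_j,s_j}$ recovers the common rescaled space, and landing the second Lebesgue factor at exactly $1/p_j=\tfrac12(1/r_j+1/s_j)$ are nontrivial computations carried out in Lemmas~\ref{lem:lrrescale} and~\ref{lem:lrinterpolation}. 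Without these the argument is incomplete. Finally, a small citation issue: the relevant Banach function space extrapolation is \cite[Theorem~A]{Ni22}; Theorem~B there is the multilinear version.
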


\begin{remark}
   The limited range, off-diagonal Rubio de Francia extrapolation theorem in \cite{Ni23} is phrased for \emph{quasi}-Banach function spaces. However, one of our other main ingredients, the compactness result in Proposition~\ref{prop:compactinterpolation}, does not seem to be available in the quasi-setting. Therefore, we have stated Theorem \ref{thm:mainlrod} in the Banach function space setting and leave its extension to the quasi-Banach function space setting with $r_1,r_2 \in (0,\infty)$  as an open problem. Note that quasi-Banach function spaces typically only show up in harmonic analysis in multilinear or endpoint settings.
\end{remark}

As said, the main challenge in the proof of Theorem \ref{thm:mainlrod} is to unpack all definitions and to keep track of the additional parameters. In order to do so, we prove a couple of technical lemmata. We start with a limited range version of the self-improvement property for the maximal operator.

\begin{lemma}\label{lem:lrsi}
Let $1\leq r<r^\ast<s^\ast<s\leq\infty$ and let $X$ be an $r^\ast$-convex and $s^\ast$-concave Banach function space. If
\[
M: X_{r,s}\to X_{r,s},\quad M:(X_{r,s})'\to (X_{r,s})',
\]
then there are $r_0\in(r,r^\ast]$, $s_0\in[s^\ast,s)$ so that for all $\widetilde{r}\in(r,r_0)$, $\widetilde{s}\in(s_0,s)$ we have
\[
M:X_{\widetilde{r},\widetilde{s}}\to X_{\widetilde{r},\widetilde{s}},\quad M:(X_{\widetilde{r},\widetilde{s}})'\to (X_{\widetilde{r},\widetilde{s}})'.
\]
\end{lemma}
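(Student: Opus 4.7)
The plan is to reduce Lemma~\ref{lem:lrsi} to the unlimited self-improvement result Corollary~\ref{cor:si} applied to the rescaled space $Y:=X_{r,s}$. The ranges of $\widetilde{r}$ and $\widetilde{s}$ in the conclusion will correspond, via the affine transformation underlying the $(r,s)$-rescaling, to ranges of concavification parameters for $Y$, bringing us back to the full-range situation already handled.

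First, I would verify that $Y$ satisfies the hypotheses of Corollary~\ref{cor:si}. The boundedness $M:Y\to Y$ and $M:Y'\to Y'$ is part of the assumption. For the convexity and concavity, using the defining formula $Y=[((X^r)')^{(s/r)'}]'$ together with the standard rules for how convexity and concavity behave under associate spaces and $p$-concavifications, one computes that $Y$ is $\alpha$-convex and $\beta$-concave with
\[
\tfrac{1}{\alpha}=\tfrac{1/r^\ast-1/s}{1/r-1/s},\qquad \tfrac{1}{\beta}=\tfrac{1/s^\ast-1/s}{1/r-1/s},
\]
both lying strictly between $0$ and $1$ since $r<r^\ast<s^\ast<s$. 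Corollary~\ref{cor:si} then supplies $\alpha_0\in(1,\alpha]$ and $\beta_0\in[\beta,\infty)$ so that for every $\widetilde{\alpha}\in(1,\alpha_0]$ and $\widetilde{\beta}\in[\beta_0,\infty)$, the operator $M$ is bounded on $Y_{\widetilde{\alpha},\widetilde{\beta}}$ and on $(Y_{\widetilde{\alpha},\widetilde{\beta}})'$.

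The second step is the translation back to $X$. I would define $\widetilde{r}\in(r,r^\ast]$ and $\widetilde{s}\in[s^\ast,s)$ by the affine relations
\[
\tfrac{1}{\widetilde{\alpha}}=\tfrac{1/\widetilde{r}-1/s}{1/r-1/s},\qquad \tfrac{1}{\widetilde{\beta}}=\tfrac{1/\widetilde{s}-1/s}{1/r-1/s},
\]
and let $r_0,s_0$ correspond to $\alpha_0,\beta_0$. The crux is then the reiteration identity
\[
(X_{r,s})_{\widetilde{\alpha},\widetilde{\beta}}=X_{\widetilde{r},\widetilde{s}},
\]
which I would establish via Proposition~\ref{prop:fact}(ii): it suffices to exhibit a Lozanovskii factorization $X=W^{1/\widetilde{r}-1/\widetilde{s}}\cdot L^{\widetilde{s}}(\R^d)$ with $W=(X_{r,s})_{\widetilde{\alpha},\widetilde{\beta}}$. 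This factorization is obtained by inserting $X_{r,s}=(X_{r,s})_{\widetilde{\alpha},\widetilde{\beta}}^{1/\widetilde{\alpha}-1/\widetilde{\beta}}\cdot L^{\widetilde{\beta}}(\R^d)$ from Proposition~\ref{prop:fact}(i) into $X=X_{r,s}^{1/r-1/s}\cdot L^s(\R^d)$, and simplifying using the associativity of Calder\'on--Lozanovskii products together with the identity $L^{p}(\R^d)^{\theta}\cdot L^{q}(\R^d)^{1-\theta}=L^{1/(\theta/p+(1-\theta)/q)}(\R^d)$; the affine relations between $(\widetilde{r},\widetilde{s})$ and $(\widetilde{\alpha},\widetilde{\beta})$ are precisely what makes the exponents match.

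The main obstacle I anticipate is this reiteration bookkeeping, which is essentially the same calculation underlying \cite[Proposition~2.30]{Ni22} (cited in the proof of Corollary~\ref{cor:si}); in fact, a direct appeal to that reference with the role of $(1,\infty)$ replaced by $(r,s)$ should give the conclusion once the affine change of variables has been set up. Once the reiteration identity is in hand, a small monotonicity check shows that as $\widetilde{\alpha}$ decreases through $(1,\alpha_0)$ and $\widetilde{\beta}$ increases through $(\beta_0,\infty)$, the parameter $\widetilde{r}$ ranges through $(r,r_0)$ and $\widetilde{s}$ through $(s_0,s)$, yielding the claimed $M$-bounds on $X_{\widetilde{r},\widetilde{s}}$ and $(X_{\widetilde{r},\widetilde{s}})'$ and completing the proof.
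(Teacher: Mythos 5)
Your proposal is correct and is essentially the paper's proof, slightly repackaged: the paper applies Theorem~\ref{thm:mainsi} twice in-line (which is exactly the content of Corollary~\ref{cor:si}, the result you invoke) and routes the translation back to $X$ through \cite[Lemma~2.32, Propositions~2.14, 2.30]{Ni22}, which is the same reiteration identity $(X_{r,s})_{\widetilde\alpha,\widetilde\beta}=X_{\widetilde r,\widetilde s}$ that you derive via the Lozanovskii factorization and Proposition~\ref{prop:fact}\ref{it:lozfact2}. Your exponent bookkeeping checks out (e.g.\ $ab=\frac{1}{\widetilde r}-\frac{1}{\widetilde s}$ and $\frac{b}{\widetilde\beta}+\frac1s=\frac{1}{\widetilde s}$ with $a=\frac{1}{\widetilde\alpha}-\frac{1}{\widetilde\beta}$, $b=\frac1r-\frac1s$), and the required convexity/concavity of $X_{r,s}$ and the range constraints $\widetilde r\in(r,r_0)\subseteq(r,r^\ast]$, $\widetilde s\in(s_0,s)\subseteq[s^\ast,s)$ follow from the affine monotonicity exactly as you indicate.
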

\begin{proof}
By Theorem~\ref{thm:mainsi} there is a $q_0\in(1,r^\ast]$ so that for all $q\in(1,q_0]$ we have
\[
M: X_{r,s}^q\to X_{r,s}^q,\quad M:(X_{r,s}^q)'\to (X_{r,s}^q)'.
\]
Defining
\[
\tfrac{1}{r_0}:=\tfrac{1}{q}\big(\tfrac{1}{r}-\tfrac{1}{s}\big)+ \tfrac{1}{s}\in(\tfrac{1}{s},\tfrac{1}{r}),
\]
it follows from \cite[Lemma~2.32]{Ni23} that for $q>1$ chosen small enough so that $r_0\leq r^\ast$, we have
\[
X_{r,s}^q=X_{r_0,s}.
\]
Note that $(X_{r_0,s})'=[(X^{r_0})']^{(\frac{s}{r_0})'}$ is $t^*$-convex, where 
$$
t^* : = \frac{\frac{1}{r_0}-\frac{1}{s}}{\frac{1}{r_0}-\frac{1}{s^\ast}}.
$$
Thus, by  Theorem~\ref{thm:mainsi}, we find a $t_0\in(1,t^*]$ so that for all $t\in(1,t_0]$ we have that $M$ is bounded on $[(X^{r_0})']^{t(\frac{s}{r_0})'}$ and $\big([(X^{r_0})']^{t(\frac{s}{r_0})'}\big)'$.

Now define
\[
\tfrac{1}{s_0}:=\tfrac{1}{r_0}-\tfrac{1}{t}\big(\tfrac{1}{r_0}-\tfrac{1}{s}\big)=\tfrac{1}{q}\tfrac{1}{t'}\big(\tfrac{1}{r}-\tfrac{1}{s}\big)+\tfrac{1}{s}\in(\tfrac{1}{s},\tfrac{1}{r_0})
\]
which, if $t>1$ is chosen small enough, satisfies $s_0\geq s^\ast$. Then we have
\[
t\big(\tfrac{s}{r_0}\big)'=\big(\tfrac{s_0}{r_0}\big)'
\]
so that
\[
M:X_{r_0,s_0}\to X_{r_0,s_0},\qquad M:(X_{r_0,s_0})'\to(X_{r_0,s_0})'.
\]
Now let $\widetilde{r}\in(r,r_0)$, $\widetilde{s}\in(s_0,s)$. Noting that  $(X_{r_0,s_0})'=((X^r)')_{(\frac{s_0}{r})',(\frac{r_0}{r})'}$ by \cite[Proposition~2.14]{Ni23}, it follows from \cite[Proposition~2.30]{Ni23} that also
\[
M:X_{\widetilde{r},\widetilde{s}}\to X_{\widetilde{r},\widetilde{s}},\qquad M:(X_{\widetilde{r},\widetilde{s}})'\to(X_{\widetilde{r},\widetilde{s}})'.
\]
The assertion follows.
\end{proof}

Next, we prove a rescaling lemma.

\begin{lemma}\label{lem:lrrescale}
Let $1\leq r<{\widetilde{r}}<\widetilde{s}<s\leq\infty$ 
and let $X$ be an ${\widetilde{r}}$-convex and $\widetilde{s}$-concave Banach function space. Define
\[
\frac{1}{t}:=\frac{\frac{1}{r}\frac{1}{\widetilde{s}}-\frac{1}{{\widetilde{r}}}\frac{1}{s}}{\frac{1}{r}-\frac{1}{s}}
\]
Then $X$ is $t$-concave, and $[X_{{\widetilde{r}},t}]^{\frac{1}{r}}$ is $r$-convex and $s$-concave with
\[
([X_{{\widetilde{r}},t}]^{\frac{1}{r}})_{r,s}=X_{{\widetilde{r}},\widetilde{s}}.
\]
\end{lemma}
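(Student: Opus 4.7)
The plan is to reduce the entire lemma to a single algebraic identity coming from the defining formula for $t$, and then unwind all conclusions via the duality of convexity/concavity and the rescaling rules for concavifications. Write $Y:=[X_{\widetilde{r},t}]^{1/r}$ throughout.

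First I would handle the algebraic preparations. A direct substitution of the formula for $t$ yields
\[
\frac{1}{t}-\frac{1}{\widetilde{s}}=\frac{\frac{1}{s}\bigl(\frac{1}{\widetilde{s}}-\frac{1}{\widetilde{r}}\bigr)}{\frac{1}{r}-\frac{1}{s}}\leq 0,
\]
with equality only when $s=\infty$; hence $t\geq\widetilde{s}$ and $X$, being $\widetilde{s}$-concave, is in particular $t$-concave, so $X_{\widetilde{r},t}$ is well defined via Proposition~\ref{prop:fact}. A parallel computation of $1-\widetilde{r}/t$ then yields the \emph{key identity}
\[
\bigl(1-\tfrac{\widetilde{r}}{t}\bigr)\bigl(1-\tfrac{r}{s}\bigr)=1-\tfrac{\widetilde{r}}{\widetilde{s}},\qquad\text{equivalently}\qquad\bigl(\tfrac{t}{\widetilde{r}}\bigr)'\bigl(\tfrac{s}{r}\bigr)'=\bigl(\tfrac{\widetilde{s}}{\widetilde{r}}\bigr)',
\]
which is the single algebraic input needed for everything that follows.

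Next I would derive the convexity/concavity of $Y$ purely from the standard rescaling rules ``$A^a$ is $p/a$-convex (resp. $q/a$-concave) when $A$ is $p$-convex (resp. $q$-concave)'' together with the duality ``$A$ is $q$-concave iff $A'$ is $q'$-convex''. Since $X_{\widetilde{r},t}$ is a Banach function space, hence $1$-convex, its $(1/r)$-concavification $Y$ is $r$-convex. For $s$-concavity of $Y$ it suffices to show $X_{\widetilde{r},t}$ is $s/r$-concave, equivalently that
\[
(X_{\widetilde{r},t})'=\bigl((X^{\widetilde{r}})'\bigr)^{(t/\widetilde{r})'}
\]
is $(s/r)'$-convex. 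The concavification rule reduces this to $(X^{\widetilde{r}})'$ being $(s/r)'(t/\widetilde{r})'$-convex; by the key identity this equals $(\widetilde{s}/\widetilde{r})'$-convex, which by duality is equivalent to $X^{\widetilde{r}}$ being $\widetilde{s}/\widetilde{r}$-concave, i.e.\ to $X$ being $\widetilde{s}$-concave, which is exactly the hypothesis.

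Finally, the identity $Y_{r,s}=X_{\widetilde{r},\widetilde{s}}$ drops out by unwinding the definition: since $Y^r=X_{\widetilde{r},t}$,
\[
Y_{r,s}=\bigl[((Y^r)')^{(s/r)'}\bigr]'
=\bigl[\bigl(((X^{\widetilde{r}})')^{(t/\widetilde{r})'}\bigr)^{(s/r)'}\bigr]'
=\bigl[((X^{\widetilde{r}})')^{(t/\widetilde{r})'(s/r)'}\bigr]'
=\bigl[((X^{\widetilde{r}})')^{(\widetilde{s}/\widetilde{r})'}\bigr]'
=X_{\widetilde{r},\widetilde{s}},
\]
using the iterated-concavification rule $(A^a)^b=A^{ab}$ and, in the fourth equality, the key identity. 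The main obstacle is really just isolating the key identity from the somewhat opaque defining formula for $t$; once that piece of algebra is in hand, the proof is a direct chase through the rescaling and duality rules from Section~\ref{sec:BFS}.
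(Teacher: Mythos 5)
Your proof is correct and follows essentially the same route as the paper: you isolate the same key identity $(\frac{t}{\widetilde{r}})'(\frac{s}{r})'=(\frac{\widetilde{s}}{\widetilde{r}})'$, check $t\geq\widetilde{s}$ for $t$-concavity of $X$, obtain $r$-convexity from $Y^r=X_{\widetilde{r},t}$ being a Banach function space, and derive both the $s$-concavity and the identity $Y_{r,s}=X_{\widetilde{r},\widetilde{s}}$ by chasing the iterated-concavification and duality rules through the associate space. The paper merely packages the $s$-concavity and the final identity into a single line (showing $(Y_{r,s})'=(X_{\widetilde{r},\widetilde{s}})'$ is a Banach function space), whereas you unwind them as separate chains of equivalences; the underlying computation is the same.
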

\begin{proof}
We have
\[
\frac{1}{t}=\frac{\frac{1}{r}(\frac{1}{\widetilde{s}}-\frac{1}{s})+(\frac{1}{r}-\frac{1}{{\widetilde{r}}})\frac{1}{s}}{\frac{1}{r}-\frac{1}{s}}>0
\]
and
\[
\frac{1}{\widetilde{s}}-\frac{1}{t}=\frac{1}{s}\frac{\frac{1}{{\widetilde{r}}}-\frac{1}{\widetilde{s}}}{\frac{1}{r}-\frac{1}{s}}\geq 0.
\]
Thus, $t\geq \widetilde{s}$ and, hence, $X$ is $t$-concave. Therefore $([X_{{\widetilde{r}},t}]^{\frac{1}{r}})^r=X_{{\widetilde{r}},t}$ is a Banach function space, and hence, $[X_{{\widetilde{r}},t}]^{\frac{1}{r}}$ is $r$-convex. Moreover, since
\[
\big(\tfrac{t}{{\widetilde{r}}}\big)'\big(\tfrac{s}{r}\big)'=\big(\tfrac{\widetilde{s}}{{\widetilde{r}}}\big)'
\]
we have
\[
\big([([X_{{\widetilde{r}},t}]^{\frac{1}{r}})^r]'\big)^{(\frac{r}{s})'}=[(X^{{\widetilde{r}}})']^{(\frac{{\widetilde{r}}}{\widetilde{s}})'}=(X_{\widetilde{r},\widetilde{s}})'
\]
which is a Banach function space, since $X$ is ${\widetilde{r}}$-convex and $\widetilde{s}$-concave. This proves that $[X_{{\widetilde{r}},t}]^{\frac{1}{r}}$ is $s$-concave. Moreover, taking associate spaces in this equality, the final assertion follows.
\end{proof}

We finish our preparation for the proof of Theorem \ref{thm:mainlrod} with a factorization lemma in the limited range setting.

\begin{lemma}\label{lem:lrinterpolation}
Let $1 \leq r<\widetilde{r}<\widetilde{s}<s\leq\infty$ and let $X$ be an $\widetilde{r}$-convex and $\widetilde{s}$-concave Banach function space. Define
\[
\frac{1}{t}:=\frac{\frac{1}{r}\frac{1}{\widetilde{s}}-\frac{1}{\widetilde{r}}\frac{1}{s}}{\frac{1}{r}-\frac{1}{s}}
\]
and
\[
\theta:=1-\frac{\frac{1}{\widetilde{r}}-\frac{1}{\widetilde{s}}}{\frac{1}{r}-\frac{1}{s}}\in(0,1),\qquad p=\frac{\frac{1}{\widetilde{s}}-\frac{1}{s}+\frac{1}{r}-\frac{1}{\widetilde{r}}}{\frac{1}{r}(\frac{1}{\widetilde{s}}-\frac{1}{s})+(\frac{1}{r}-\frac{1}{\widetilde{r}})\frac{1}{s}}\in (1,\infty).
\]
Then
\[
X=\big([X_{\widetilde{r},t}]^{\frac{1}{r}}\big)^{1-\theta}\cdot L^p(\R^d)^\theta.
\]
\end{lemma}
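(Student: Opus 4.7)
The plan is to reduce the claimed factorisation to two applications of Proposition~\ref{prop:fact}(i), one for $X$ itself and one for the auxiliary space $Y := [X_{\widetilde{r},t}]^{1/r}$, and then to match the resulting Lebesgue factors by a direct arithmetic check.

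As a first step, I would invoke Lemma~\ref{lem:lrrescale}, which tells us precisely that $Y$ is $r$-convex and $s$-concave with $Y_{r,s} = X_{\widetilde{r},\widetilde{s}}$. Feeding this into Proposition~\ref{prop:fact}(i) then produces
\[
Y = (X_{\widetilde{r},\widetilde{s}})^{\frac{1}{r} - \frac{1}{s}} \cdot L^s(\R^d),
\]
while the same proposition applied directly to $X$ (which is $\widetilde{r}$-convex and $\widetilde{s}$-concave by hypothesis) gives
\[
X = (X_{\widetilde{r},\widetilde{s}})^{\frac{1}{\widetilde{r}} - \frac{1}{\widetilde{s}}} \cdot L^{\widetilde{s}}(\R^d).
\]
Substituting the first factorisation into $Y^{1-\theta} \cdot L^p(\R^d)^\theta$ and invoking the standard associativity of Calder\'on--Lozanovskii products yields
\[
Y^{1-\theta} \cdot L^p(\R^d)^\theta = (X_{\widetilde{r},\widetilde{s}})^{(1-\theta)(\frac{1}{r} - \frac{1}{s})} \cdot L^s(\R^d)^{1-\theta} \cdot L^p(\R^d)^\theta.
\]

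Comparing this with the factorisation of $X$, the identity reduces to two numerical checks. The first, $(1-\theta)(\frac{1}{r} - \frac{1}{s}) = \frac{1}{\widetilde{r}} - \frac{1}{\widetilde{s}}$, is immediate from the definition of $\theta$. The second is that the two Lebesgue factors combine as $L^s(\R^d)^{1-\theta} \cdot L^p(\R^d)^\theta = L^{\widetilde{s}}(\R^d)$; since a Calder\'on--Lozanovskii product of Lebesgue spaces is again a Lebesgue space whose reciprocal exponent is the convex combination of the reciprocals of the factors, this reduces to verifying
\[
\tfrac{1-\theta}{s} + \tfrac{\theta}{p} = \tfrac{1}{\widetilde{s}}.
\]

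The hard part will be nothing deeper than this last identity: the intricate definition of $p$ in the statement is tailored precisely so that it holds. Writing $A = \frac{1}{r}$, $a = \frac{1}{\widetilde{r}}$, $b = \frac{1}{\widetilde{s}}$, $B = \frac{1}{s}$, the definitions give $\frac{\theta}{p} = \frac{A(b-B) + (A-a)B}{A-B}$ and $\frac{1-\theta}{s} = \frac{(a-b)B}{A-B}$; adding these produces $\frac{(a-b)B + A(b-B) + (A-a)B}{A-B}$, whose numerator telescopes to $b(A-B)$, giving $b = \frac{1}{\widetilde{s}}$. Assembling everything then yields $Y^{1-\theta} \cdot L^p(\R^d)^\theta = (X_{\widetilde{r},\widetilde{s}})^{1/\widetilde{r} - 1/\widetilde{s}} \cdot L^{\widetilde{s}}(\R^d) = X$, as required.
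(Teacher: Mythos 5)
Your argument is correct. The arithmetic checks out: with $A=1/r$, $a=1/\widetilde r$, $b=1/\widetilde s$, $B=1/s$, one has $1-\theta = \frac{a-b}{A-B}$ and $\theta/p = \frac{A(b-B)+(A-a)B}{A-B}$, so indeed $\frac{1-\theta}{s}+\frac{\theta}{p} = b$, and the exponent match $(1-\theta)(\frac1r-\frac1s)=\frac1{\widetilde r}-\frac1{\widetilde s}$ is immediate from the definition of $\theta$.

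Your route is genuinely different from the paper's, though it uses the same two ingredients (Proposition~\ref{prop:fact}\ref{it:lozfact1} and Lemma~\ref{lem:lrrescale}). The paper's proof never factorizes through $X_{\widetilde r,\widetilde s}$: it verifies only the two scalar identities $\frac1r(1-\theta)=\frac1{\widetilde r}-\frac1t$ and $t=p/\theta$, rewrites $\bigl([X_{\widetilde r,t}]^{1/r}\bigr)^{1-\theta}\cdot L^p(\R^d)^\theta$ directly as $(X_{\widetilde r,t})^{\frac1{\widetilde r}-\frac1t}\cdot L^t(\R^d)$, and then applies Proposition~\ref{prop:fact}\ref{it:lozfact1} once to $X$ as an $\widetilde r$-convex, $t$-concave space (the $t$-concavity being the only thing borrowed from Lemma~\ref{lem:lrrescale}). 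You instead apply Proposition~\ref{prop:fact}\ref{it:lozfact1} twice, once to $X$ with $(\widetilde r,\widetilde s)$ and once to $Y=[X_{\widetilde r,t}]^{1/r}$ with $(r,s)$ using the full conclusion $Y_{r,s}=X_{\widetilde r,\widetilde s}$ of Lemma~\ref{lem:lrrescale}, and then match the resulting factorizations over the common space $X_{\widetilde r,\widetilde s}$. Your version is a couple of steps longer and requires invoking associativity/distributivity of concavifications over pointwise products, but in exchange it makes the role of $X_{\widetilde r,\widetilde s}$ (the space that actually appears in the proof of Theorem~\ref{thm:mainlrod}) explicit and explains why the two Lebesgue factors combine as they do; the paper's version is shorter but more opaque about how the specific formulas for $t$, $\theta$, $p$ were found.
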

\begin{proof}
Since
\[
\tfrac{1}{r}(1-\theta)=\tfrac{1}{\widetilde{r}}-\tfrac{1}{t}
\]
and $t=\frac{p}{\theta}$, we have by Proposition \ref{prop:fact} and Lemma \ref{lem:lrrescale}
\[
\big([X_{\widetilde{r},t}]^{\frac{1}{r}}\big)^{1-\theta}\cdot L^p(\R^d)^\theta
=(X_{\widetilde{r},t})^{\frac{1}{\widetilde{r}}-\frac{1}{t}}\cdot L^{t}(\R^d)=X,
\]
as asserted.
\end{proof}

\begin{proof}[Proof of Theorem~\ref{thm:mainlrod}]
Let $(p_1,p_2)\in\mc{P}$ and $w\in A_{\vec{p},(\vec{r},\vec{s})}$ so that $T:L^{p_1}_w(\R^d)\to L^{p_2}_w(\R^d)$ is compact. Note that, by limited range, off-diagonal Rubio de Francia extrapolation for weighted Lebesgue spaces as in \cite[Theorem~1.8]{CM17} (which is also a special case of \cite[Theorem~A]{Ni23}), we actually obtain the boundedness assumption on $T$ for all $(q_1,q_2) \in \mc{P}$ and weights in $A_{\vec{q},(\vec{r},\vec{s})}$. In particular, this is the case for $(q_1,q_2)\in\mc{P}$ defined by 
\[
\frac{1}{q_j}:=\frac{1}{s_j}+\frac{1}{r_j}-\frac{1}{p_j}, \qquad j\in\{1,2\},
\]
and the weight $w^{-1}\in A_{\vec{q},(\vec{r},\vec{s})}$, which one can directly verify using the definition of the weight constant. Since
\[
L^{\frac{1}{\frac{1}{2}(\frac{1}{r_j}+\frac{1}{s_j})}}(\R^d)=L^{p_j}_w(\R^d)^{\frac{1}{2}}\cdot L^{q_j}_{w^{-1}}(\R^d)^{\frac{1}{2}},
\]
it follows Proposition~\ref{prop:compactinterpolation} that $T:L^{\frac{1}{\frac{1}{2}(\frac{1}{r_1}+\frac{1}{s_1})}}(\R^d)\to L^{\frac{1}{\frac{1}{2}(\frac{1}{r_2}+\frac{1}{s_2})}}(\R^d)$ is compact. Thus, we have reduced the compactness assumption to the case $\frac{1}{p_j}=\frac{1}{2}(\frac{1}{r_j}+\frac{1}{s_j})$ for $j\in\{1,2\}$ and $w=1$.

Next, by Lemma~\ref{lem:lrsi} we have
\[
M:(X_1)_{\widetilde{r}_1,\widetilde{s}_1}\to (X_1)_{\widetilde{r}_1,\widetilde{s}_1},\quad M:((X_1)_{\widetilde{r}_1,\widetilde{s}_1})'\to ((X_1)_{\widetilde{r}_1,\widetilde{s}_1})'
\]
for all $\widetilde{r}_1\in(r_1,r_1^\ast]$, $\widetilde{s}_1\in[s_1^\ast,s_1)$ with
\[
\tfrac{1}{\widetilde{s}_1}-\tfrac{1}{s_1}=\tfrac{1}{r_1}-\tfrac{1}{\widetilde{r}_1}=\varepsilon
\]
for $\varepsilon>0$ small enough.

Defining $\frac{1}{\widetilde{r}_2}:=\frac{1}{\widetilde{r}_1}-\alpha$ and $\frac{1}{\widetilde{s}_2}:=\frac{1}{\widetilde{s}_1}-\alpha$, it follows from \cite[Lemma~2.32]{Ni23} that for
\[
\beta:=\frac{\frac{1}{r_1}-\frac{1}{s_1}}{\frac{1}{\widetilde{r}_1}-\frac{1}{\widetilde{s}_1}}=\frac{\frac{1}{r_2}-\frac{1}{s_2}}{\frac{1}{\widetilde{r}_2}-\frac{1}{\widetilde{s}_2}},\quad\gamma:=\frac{\frac{1}{\widetilde{r}_1}-\frac{1}{r_1}}{\frac{1}{\widetilde{r}_1}-\frac{1}{r_1}+\frac{1}{s_1}-\frac{1}{\widetilde{s}_1}}=\frac{\frac{1}{\widetilde{r}_2}-\frac{1}{r_2}}{\frac{1}{\widetilde{r}_2}-\frac{1}{r_2}+\frac{1}{s_2}-\frac{1}{\widetilde{s}_2}}
\]
we have
\begin{align*}
(X_1)_{\widetilde{r_1},\widetilde{s_1}}
&=(X_1)_{r_1,s_1}^\beta\cdot L^{\frac{1}{1-\gamma}}(\R^d)^{1-\beta}\\
&=(X_2)_{r_2,s_2}^\beta\cdot L^{\frac{1}{1-\gamma}}(\R^d)^{1-\beta}=(X_2)_{\widetilde{r}_2,\widetilde{s}_2}.
\end{align*}
Hence, by Lemma~\ref{lem:lrrescale} we conclude that
\[
\big([(X_1)_{\widetilde{r}_1,t_1}]^{\frac{1}{r_1}}\big)_{r_1,s_1}=(X_1)_{\widetilde{r_1},\widetilde{s_1}} = (X_2)_{\widetilde{r}_2,\widetilde{s}_2}=\big([(X_2)_{\widetilde{r}_2,t_2}]^{\frac{1}{r_2}}\big)_{r_2,s_2},
\]
where
\[
\frac{1}{t_j}=\frac{\frac{1}{r_j}\frac{1}{\widetilde{s}_j}-\frac{1}{\widetilde{r}_j}\frac{1}{s_j}}{\frac{1}{r_j}-\frac{1}{s_j}}, \qquad  j\in\{1,2\}.
\]
Thus, by \cite[Theorem~A]{Ni23} it follows that
\[
T:[(X_1)_{\widetilde{r}_1,t_1}]^{\frac{1}{r_1}}\to[(X_2)_{\widetilde{r}_2,t_2}]^{\frac{1}{r_2}}.
\]
 Moreover, by Lemma~\ref{lem:lrinterpolation} we have
\begin{align*}
    X_1&=\big([(X_1)_{\widetilde{r}_1,t_1}]^{\frac{1}{r_1}}\big)^{1-\theta}\cdot L^{p_1}(\R^d)^\theta,\\X_2&=\big([(X_2)_{\widetilde{r}_2,t_2}]^{\frac{1}{r_2}}\big)^{1-\theta}\cdot L^{p_2}(\R^d)^\theta
\end{align*}
with
\[
\theta=1-\frac{\frac{1}{\widetilde{r}_1}-\frac{1}{\widetilde{s}_1}}{\frac{1}{r_1}-\frac{1}{s_1}}=1-\frac{\frac{1}{\widetilde{r}_2}-\frac{1}{\widetilde{s}_2}}{\frac{1}{r_2}-\frac{1}{s_2}},
\]
and
\[
\tfrac{1}{p_j}=\tfrac{1}{2}\big(\tfrac{1}{r_j}+\tfrac{1}{s_j}\big), \qquad j\in\{1,2\}.
\]
 Hence, it follows from Proposition~\ref{prop:compactinterpolation} that $T:X_1\to X_2$ is compact. This proves the assertion.
\end{proof}

\section{Applications}\label{sec:applications}
In this final section, we will briefly outline applications of Theorem~\ref{thm:A}  and Theorem~\ref{thm:C} to the compactness of commutators of singular integral operators and multiplication by functions with vanishing mean oscillation. We refer the reader to \cite[Section 5-10]{HL23} for further examples of operators to which Theorem~\ref{thm:A} or Theorem~\ref{thm:C} are applicable.

We start with an application to Calder\'on--Zygmund singular integral operators. Let $T \colon L^2(\R^d) \to L^2(\R^d)$ be a bounded linear operator and suppose that, for any $f \in C^\infty_c(\R^d)$, $T$ has the representation
$$
Tf(x) = \int_{\R^d}K(x,y)f(y)\dd y, \qquad x \in \R^d \setminus \supp(f),
$$
where the kernel satisfies the estimates
\begin{align*}
    \abs{K(x,y) - K(x,y)}   &\leq \omega\hab{\tfrac{\abs{x-x'}}{\abs{x-y}}}, && 0<\abs{x-x'} <\tfrac12 \abs{x-y},\\
    \abs{K(x,y) - K(x,y')}   &\leq \omega\hab{\tfrac{\abs{x-x'}}{\abs{x-y}}}, && 0<\abs{y-y'} <\tfrac12 \abs{x-y},
\end{align*}
for some increasing, subadditive $\omega\colon [0,1] \to [0,\infty)$. If  $\int_0^1\omega(t) \frac{\rm{d} t}{t}<\infty$, we call $T$ a \emph{Calder\'on--Zygmund operator with Dini-continuous kernel}.

We will be concerned with the commutators 
$$
[b,T]f := bT(f) -T(bf)
$$
for pointwise multipliers $b \in L^1_{\loc}(\R^d)$ belonging to the space
$$
\CMO := \overline{C_c^\infty(\R^d)}^{\nrm{\cdot}_{\BMO}},
$$
where $\BMO$ denotes the classical space of functions with bounded mean oscillation. Note that the space $\CMO$ is the space of all functions with vanishing mean oscillation and is therefore  also  denoted by $\VMO$ in the literature, see, e.g., \cite[Lemma 3]{Uc78}.

\begin{theorem}\label{thm:CZO}
Let $T$ be a Calder\'on--Zygmund  operator with {Dini-}con\-tinuous kernel. Let $1<r<s<\infty$ and suppose $X$ is an $r$-convex and $s$-concave Banach function space over $\R^d$ such that
\[
M:X\to X,\quad M:X'\to X'.
\]
For $b \in \mathrm{CMO}$, the commutator $[b,T]\colon X \to X$ is a compact operator.
\end{theorem}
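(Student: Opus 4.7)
The plan is to apply Theorem \ref{thm:main} directly to the operator $[b,T]$. The hypotheses on $X$ in Theorem \ref{thm:CZO} coincide precisely with those of Theorem \ref{thm:main} ($r$-convex, $s$-concave Banach function space with $M$ bounded on both $X$ and $X'$), so the only task is to verify the two bullet points in the statement of Theorem \ref{thm:main} for the linear operator $[b,T]$: weighted boundedness on \emph{some} $L^p_w(\R^d)$ for \emph{all} $w \in A_p$, and compactness on \emph{some} $L^p_w(\R^d)$ for \emph{some} $w \in A_p$.

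For the boundedness hypothesis, I would invoke the classical weighted commutator estimate. Since $\CMO \subseteq \BMO$, the bound $\|[b,T]f\|_{L^p_w(\R^d)} \lesssim \|b\|_{\BMO}\|f\|_{L^p_w(\R^d)}$ is known to hold for every $p \in (1,\infty)$ and every $w \in A_p$; for standard Calder\'on--Zygmund operators this is the classical theorem of Alvarez--Bagby--Kurtz--P\'erez, and its extension to Dini-continuous kernels can be read off from the sparse commutator bounds of Lerner--Ombrosi--Rivera-R\'ios. This covers the first bullet in Theorem \ref{thm:main} for any chosen $p \in (1,\infty)$.

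For the compactness hypothesis, I would specialize to the Hilbert space setting $p = 2$, $w = 1 \in A_2$. The compactness of $[b,T]$ on $L^2(\R^d)$ for $b \in \CMO$ and $T$ a standard Calder\'on--Zygmund operator is the classical result of Uchiyama \cite{Uc78}, and its extension to Calder\'on--Zygmund operators with Dini-continuous kernel is part of the literature cited and used in \cite{HL23}. (Alternatively one can obtain the same Hilbert-space compactness by running Uchiyama's approximation argument directly in the Dini setting, approximating $b \in \CMO$ by $C^\infty_c$ functions for which the commutator is a Hilbert--Schmidt-type operator.)

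With both hypotheses verified, Theorem \ref{thm:main} yields compactness of $[b,T]:X \to X$ and the proof is complete. The only real obstacle is a bibliographic one, namely pinpointing the precise reference that states the $L^2$-compactness of $[b,T]$ for CMO symbols and Dini kernels; the conceptual content of the argument is entirely absorbed into Theorem \ref{thm:main}, which is what makes the application so short.
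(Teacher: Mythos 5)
Your proposal is correct and takes essentially the same route as the paper: both verify the two hypotheses of Theorem~\ref{thm:main} for $[b,T]$ by quoting the weighted boundedness of the commutator (the paper reduces this via \cite[Theorem 2.13]{ABKP93} to the weighted boundedness of $T$ itself, while you cite the resulting commutator bound directly, but these are the same underlying result) and Uchiyama's $L^2(\R^d)$ compactness theorem.
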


\begin{proof}
    In view of Theorem \ref{thm:A}, it suffices to check that $[b,T]$ is bounded on $L^2_w(\R^d)$ for all $w \in A_2$ and that $[b,T]$ is compact on $L^2(\R^d)$.  For the weighted boundedness it suffices to check that $T$ is bounded on $L^2_w(\R^d)$ for all $w \in A_2$ by \cite[Theorem 2.13]{ABKP93} (see \cite{BMMST20} for a modern approach). The boundedness of $T$ on $L^2_w(\R^d)$ for all $w \in A_2$ is classical (see \cite{La17,LO19} for a modern approach). Finally, the compactness of $[b,T]$ on $L^2(\R^d)$ was shown by Uchiyama \cite{Uc78}.
\end{proof}

Theorem \ref{thm:CZO} in the specific case where $X$ is a weighted Lebesgue space was previously obtained by Clop and Cruz \cite{CC13} and subsequently proven using an extrapolation of compactness argument by Hyt\"onen and Lappas \cite{HL23}. 
Theorem \ref{thm:CZO} yields many examples of Banach function spaces $X$ for which the compactness of $[b,T]$ was previously unknown.

For example, one can consider weighted variable Lebesgue spaces $X=L_w^{p(\cdot)}(\R^d)$. Here the function $p \colon \R^d \to (0,\infty)$ has to satisfy
\begin{equation}\label{eq:varlebesgueassumption1}
1<\essinf p \leq \esssup p <\infty
\end{equation}
so that we can take $r=\essinf p$, $s=\esssup p$ in Theorem~\ref{thm:A}, and some additional condition ensuring the boundedness of $M$ on $X$ (see, e.g., \cite{DHHR11, Le23} for the unweighted setting and \cite{CFN12} for the weighted setting). As a matter of fact, it was originally shown by Diening in \cite[Theorem~8.1]{Di05} that in the case of unweighted variable Lebesgue spaces satisfying \eqref{eq:varlebesgueassumption1} we have $M:X\to X$ if and only if $M:X'\to X'$. We also refer the reader to \cite{Le23} for an explicit characterization of when $M$ is bounded on $X$ in terms of the exponent function $p$. Diening's duality result was extended to weighted variable Lebesgue spaces $X=L^{p(\cdot)}_w(\R^d)$ by Lerner in \cite{Le16b}. In conclusion, in the setting of weighted variable Lebesgue spaces, the boundedness condition of $M$ in Theorem~\ref{thm:A} only needs to be checked on $X$, as the boundedness on $X'$ is then guaranteed.

We refer the reader to \cite{Ni23} for further examples of Banach function spaces satisfying the conditions in Theorem \ref{thm:CZO}.
We note that for the case that $X$ is a Morrey space, similar results were obtained by Arai and Nakai \cite{AN19} and Lappas \cite{La22}, which lie beyond the scope of our general framework as explained in the introduction.

\bigskip

Next, let us consider rough singular integral operators
$$
T_\Omega f:= {\rm p.v.}\int_{\R^d} \frac{\Omega(x-y)}{\abs{x-y}^d} f(y)\dd y, \qquad x \in \R^d,
$$
where $\Omega\in L^r(\mathbf{S}^{d-1})$ for some $r \in [1,\infty]$ is homogeneous of order zero and has mean value zero.

\begin{theorem}\label{thm:RSIO}
Let $1\leq r<r^*<s^* <\infty$ and let $\Omega \in L^{r'}(\Omega)$. Suppose $X$ is an $r^*$-convex and $s^*$-concave Banach function space over $\R^d$ such that
\[
M:X^r\to X^r,\quad M:(X^r)'\to (X^r)'.
\]
For $b \in \mathrm{CMO}$, the commutator $[b,T_\Omega]\colon X \to X$ is a compact operator.
\end{theorem}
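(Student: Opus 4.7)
The plan is to apply Theorem~\ref{thm:C} in the diagonal case with $X_1=X_2=X$, $r_1=r_2=r$, $s_1=s_2=\infty$, so that $\alpha=0$ and $\mathcal{P}=\{(p,p):p\in(r,\infty)\}$. Since $(\infty/r)'=1$, the rescaled space is
\[
X_{r,\infty}=\bracs{\bracb{(X^{r})'}^{(\infty/r)'}}'=(X^r)''=X^r,
\]
so the hypothesis $M:X^r\to X^r$, $M:(X^r)'\to (X^r)'$ is exactly the maximal operator condition required by Theorem~\ref{thm:C}, and the weight class $A_{\vec{p},(\vec{r},\vec{s})}$ reduces to the standard limited range class $A_{p,(r,\infty)}$. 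The convexity/concavity hypotheses $r<r^\ast<s^\ast<\infty$ match those of Theorem~\ref{thm:C} verbatim (with $X_1=X_2$, the compatibility condition $(X_1)_{r_1,s_1}=(X_2)_{r_2,s_2}$ is automatic).

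With the extrapolation framework in place, it remains to verify the two hypotheses of Theorem~\ref{thm:C} for the operator $[b,T_\Omega]$. For the weighted boundedness, the weighted theory for rough homogeneous singular integrals with $\Omega\in L^{r'}(\mathbb{S}^{d-1})$ (in the tradition of Duoandikoetxea, subsequently refined through sparse domination) yields $T_\Omega:L^p_w(\R^d)\to L^p_w(\R^d)$ for every $p\in(r,\infty)$ and $w\in A_{p,(r,\infty)}$. Since $\CMO\subseteq\BMO$, a limited range commutator transference argument of Alvarez-Bagby-Kurtz-P\'erez type upgrades this to the weighted boundedness of $[b,T_\Omega]$ on $L^p_w(\R^d)$ for all $(p,p)\in\mathcal{P}$ and all $w\in A_{p,(r,\infty)}$, giving the first hypothesis.

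For the compactness input, it suffices to exhibit one pair $(p_0,p_0)\in\mathcal{P}$ and one weight $w_0\in A_{p_0,(r,\infty)}$ such that $[b,T_\Omega]:L^{p_0}_{w_0}(\R^d)\to L^{p_0}_{w_0}(\R^d)$ is compact. We simply take $w_0=1$ (which lies in $A_{p,(r,\infty)}$ for every $p>r$) and any $p_0\in(r,\infty)$. The unweighted compactness of $[b,T_\Omega]$ for $b\in\CMO$ and $\Omega\in L^{r'}(\mathbb{S}^{d-1})$ is known in the existing literature on rough singular integrals; the standard route follows Uchiyama's scheme, approximating $b\in\CMO$ by $C_c^\infty$ functions in $\BMO$ and then applying a Fr\'echet-Kolmogorov criterion to the smoothed commutators, with the rough kernel handled by $L^{r'}$ truncations.

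Granting these two inputs, Theorem~\ref{thm:C} immediately delivers the compactness of $[b,T_\Omega]:X\to X$. The argument is therefore structurally identical to the Calder\'on-Zygmund case in Theorem~\ref{thm:CZO}, with the single difference that one must invoke the limited range version of the extrapolation of compactness theorem (Theorem~\ref{thm:C}) rather than the full range version (Theorem~\ref{thm:A}), to accommodate the restricted range of exponents and weights available for $T_\Omega$. The main obstacle is not structural but bibliographic: isolating the precise reference that gives unweighted $L^{p_0}$ compactness of $[b,T_\Omega]$ at the generality $\Omega\in L^{r'}(\mathbb{S}^{d-1})$; the weighted boundedness, by contrast, is by now a standard consequence of sparse domination.
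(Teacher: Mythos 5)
Your proposal is correct and follows essentially the same route as the paper: apply Theorem~\ref{thm:C} in the diagonal, limited range case with $r_1=r_2=r$, $s_1=s_2=\infty$, so that $X_{r,\infty}=(X^r)''=X^r$ and the maximal operator hypotheses match verbatim, then feed in the weighted boundedness of $T_\Omega$ on $L^p_w(\R^d)$ for $w\in A_{p,(r,\infty)}$ together with unweighted compactness of $[b,T_\Omega]$ on $L^p(\R^d)$. The only thing you flag as missing is bibliographic, and the paper indeed supplies those references: weighted boundedness of $T_\Omega$ for $w\in A_{p,(r,\infty)}$ is due to Watson and to Duoandikoetxea, the limited range transference to the commutator is the Alvarez--Bagby--Kurtz--P\'erez argument (see \cite[Theorem~2.13]{ABKP93}), and unweighted $L^p$ compactness of $[b,T_\Omega]$ for $\Omega\in L^{r'}(\mathbb{S}^{d-1})$ and $b\in\CMO$ is due to Chen and Hu \cite{CH15}.
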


\begin{proof}
    In view of Theorem \ref{thm:C}, it suffices to check that $[b,T]$ is bounded on $L^p_w(\R^d)$ for some $p \in (r,\infty)$ and all $w \in A_{p,(r,\infty)}$ and that $[b,T]$ is compact on $L^p(\R^d)$. As in the proof of Theorem \ref{thm:CZO}, for the weighted boundedness it suffices note that $T_\Omega$ is bounded on $L^p_w(\R^d)$ for all $w \in A_{p,(r,\infty)}$, which was shown independently by Watson \cite{Wa90} and Duoandikoetxea \cite{Du93}. The compactness of $[b,T_\Omega]$ on $L^p(\R^d)$ was shown by Chen and Hu \cite{CH15}.
\end{proof}

Theorem \ref{thm:RSIO} in the specific case where $X$ is a weighted Lebesgue space was previously obtained by Guo and Hu \cite{GH16} and subsequently proven using an extrapolation of compactness argument by Hyt\"onen and Lappas \cite{HL23}. In the case that $X$ is a Morrey space, similar results were also obtained in \cite{GH16} and \cite{La22}.

To conclude our applications section, we note that in the recent work by Tao, Yang, Yuan and Zhang \cite{TYYZ23}, compactness of  $[b,T_\Omega]$ for $b \in \CMO$ was studied in the setting of Banach function spaces as well. Let us give a  comparison between Theorem~\ref{thm:RSIO} in the specific case $r=1$ and \cite[Theorem 3.1]{TYYZ23}
\begin{itemize}
    \item We work with Banach function spaces as in Section \ref{sec:BFS}, whereas \cite{TYYZ23} uses ball Banach function spaces, the former being a more general in the sense that any ball Banach function space is also a Banach function space in the sense of Section \ref{sec:BFS}. However, since we assume $M$ to be bounded on $X$ and $X'$, so in particular $\ind_B\in X$ and $\ind_B \in X'$ for all balls $B \subseteq \R^d$, we also have that any Banach function space satisfying the assumptions of Theorem~\ref{thm:RSIO} is a ball Banach function space.  
    \item In \cite{TYYZ23} it is assumed that there is an $r^*>1$ such that $X^{r^*}$ is a Banach function space (i.e. $X$ is $r^*$-convex) and $M$ is bounded on $(X^{r^*})'$. This is equivalent to $M$ being bounded on $X$ and $X'$ by Theorem \ref{thm:B}. Thus, the only difference in the assumptions on $X$ between Theorem \ref{thm:RSIO} with $r=1$ and \cite[Theorem 3.1]{TYYZ23} is that Theorem \ref{thm:RSIO} in addition assumes $X$ to be $s$-concave for some $s<\infty$. This, as discussed before, is a necessary assumption in our general approach.
    \item In Theorem \ref{thm:RSIO} with $r=1$  it is assumed that $\Omega \in L^\infty(\mathbb{S}^{d-1})$, whereas in \cite[Theorem 3.1]{TYYZ23} a much stronger assumption on $\Omega$ is imposed. Indeed, $\Omega$ is assumed to satisfy a Dini continuity condition, see \cite[Definition 2.15]{TYYZ23}. 
    \item The proof of Theorem \ref{thm:RSIO} uses ``soft'' techniques, whereas \cite{TYYZ23} takes a more hands-on and technical approach, developing and using a Frechet--Kolmogorov compactness criterion in ball Banach function spaces (see also \cite{GZ20}).
\end{itemize}

\bibliography{compactextrapolation}
\bibliographystyle{alpha}
\end{document}